\newtheorem*{maintheorem*}{Main Theorem}
\newtheorem{theorem}{Theorem}[section]
\newtheorem{question}[theorem]{Question}
\newtheorem{lemma}[theorem]{Lemma}
\newtheorem{cor}[theorem]{Corollary}
\theoremstyle{definition}
\newtheorem{definition}[theorem]{Definition}
\newtheorem{remark}[theorem]{Remark}
\newtheorem{example}[theorem]{Example}
\numberwithin{equation}{section}
\newcommand{\ff}{\mathbb{F}}
\newcommand{\nn}{\mathbb{N}}
\newcommand{\pp}{\mathbb{P}}
\newcommand{\qq}{\mathbb{Q}}
\newcommand{\rr}{\mathbb{R}}
\newcommand{\zz}{\mathbb{Z}}
\newcommand{\gp}{\text{gp}}
\newcommand{\uu}{\mathcal{U}}
\providecommand\ldb{\llbracket}
\providecommand\rdb{\rrbracket}
\keywords{atomic monoid, atomic domain, hereditary atomicity, overatomicity, ACCP}
\subjclass[2010]{Primary: 13F15, 13A05; Secondary: 20M13, 13F05}
\begin{document}
	
	\mbox{}
	\title{Arithmetic properties encoded in undermonoids}
	
	\author{Felix Gotti}
	\address{Department of Mathematics\\MIT\\Cambridge, MA 02139}
	\email{fgotti@mit.edu}
	
	\author{Bangzheng Li}
	\address{Department of Mathematics\\MIT\\Cambridge, MA 02139}
	\email{liben@mit.edu}
	
\date{\today}

\begin{abstract}
	Let $M$ be a cancellative and commutative monoid. A submonoid $N$ of $M$ is called an undermonoid if the Grothendieck groups of $M$ and $N$ coincide. For a given property $\mathfrak{p}$, we are interested in providing an answer to the following main question: does it suffice to check that all undermonoids of $M$ satisfy $\mathfrak{p}$ to conclude that all submonoids of $M$ satisfy $\mathfrak{p}$? In this paper, we give a positive answer to this question for the property of being atomic, and then we prove that if $M$ is hereditarily atomic (i.e., every submonoid of $M$ is atomic), then $M$ must satisfy the ACCP, proving a recent conjecture posed by Vulakh and the first author. We also give positive answers to our main question for the following well-studied factorization properties: the bounded factorization property, half-factoriality, and length-factoriality. Finally, we determine all the monoids whose submonoids/undermonoids are half-factorial (or length-factorial).
\end{abstract}
\medskip

\maketitle


\bigskip
\section{Introduction}
\label{sec:intro}
\smallskip

Following Cohn~\cite{pC68}, we say that a cancellative and commutative monoid is atomic if every non-invertible element can be factored into atoms, often referred to as irreducible elements. On other hand, an integral domain is called atomic if its multiplicative monoid is atomic. Atomic monoids and domains are the fundamental playground for the study of arithmetic of factorizations, and so they have been systematically investigated under the umbrella of factorization theory. Such a systematic study began with the appearance of the highly-cited papers \cite{AAZ90, AAZ92} by Anderson, Anderson, and Zafrullah, where they consider factorizations in the setting of integral domains, and the follow up paper \cite{fHK92} by Halter-Koch, where he generalizes and studies the bounded and finite factorization properties (both introduced in~\cite{AAZ90}) to the setting of monoids. Atomicity has been actively studied recently (see~\cite{CG19,GR25} and references therein). For a survey on atomicity, see~\cite{CG24}.
\smallskip

The notion of hereditary atomicity was first investigated by Coykendall, Hasenauer, and the first author~\cite{CGH21} in the classical setting of integral domains, and then it was studied by Vulakh and the first author~\cite{GV23} in the more general setting of commutative monoids. A commutative monoid (resp., an integral domain) is called hereditarily atomic if all its submonoids (resp., subrings) are atomic. It was conjectured in \cite[Conjecture~6.1]{CGH21} that every hereditarily atomic integral domain satisfies the ascending chain condition on principal ideals (ACCP). The similar statement of this conjecture in the context of torsion-free monoids was settled in~\cite[Theorem~3.1]{GV23}, and in the same paper the authors conjectured that similar result should hold in the more general class of commutative monoids (without the torsion-free condition). In this paper, we give a positive answer to this conjecture: every hereditarily atomic monoid satisfies the ACCP (Theorem~\ref{thm:HA implies ACCP}). It is worth emphasizing that the connection between atomicity and the ACCP has received a fair amount of attention recently: see, for instance, \cite{BC19,fG23,GL23}.
\smallskip

In the study of hereditary atomicity, it is natural to consider the related notion of underatomicity, which we introduce here. A submonoid $N$ of a cancellative and commutative monoid $M$ is called an undermonoid if the Grothendieck\ groups of $M$ and $N$ coincide. Then we say that $M$ is underatomic if all its undermonoids are atomic. It is clear that every hereditarily atomic monoid is underatomic. Although the converse is not clear, it is also true. As the first main results of this paper, we prove that the notions of hereditary atomicity and underatomicity are equivalent (Theorem~\ref{thm:HA=UA}). As a result, we obtain that, in the class of reduced monoids, both hereditary atomicity and underatomicity are conditions equivalent to that of satisfying the ACCP. 
\smallskip

An atomic monoid is called a bounded factorization monoid (or a BFM for short) if for any element there is a common upper bound for the lengths (i.e., number of irreducibles counting repetitions) of any factorization of the element. As mentioned before, the bounded factorization property was introduced in~\cite{AAZ90} and has been systematically investigated by many authors ever since (for a recent survey on the bounded and finite factorization properties in the setting of integral domains, see~\cite{AG22}). As for the case of atomicity, we also establish here that in order to conclude that every submonoid of a given monoid is a BFM it suffices to argue that every undermonoid of the given monoid is a BFM. An atomic monoid is called half-factorial (resp., length-factorial) if any two factorizations of the same element must have equal (resp., distinct) length. The term `half-factoriality' was coined by Zaks-\cite{aZ76} back in the seventies in the context of Dedekind domains but it had been previously studied by Carlitz~\cite{lC60} in 1960 in the context of algebraic number fields. The notion of length-factoriality was introduced and studied more recently by Coykendall and Smith in~\cite{CS11} under the term `other-half-factoriality' (the term `length-factoriality was later adopted in~\cite{CCGS21}'). For both the half-factorial and length factorial monoids, we also prove that it suffices to check that the same properties hold for any undermonoid to conclude that the hold for actually all submonoids.
\smallskip

This paper is structured as follows. In order to make our exposition as self-contained as possible, in Section~\ref{sec:background} we briefly revise the terminology, notation, and results that we will use in the rest of this paper. In Section~\ref{sec:HA=UA}, we prove that that if a monoid is underatomic, then it is hereditarily atomic (Theorem~\ref{thm:HA=UA}). In Section~\ref{sec:connection to the ACCP}, we turn our attention to the ACCP, and give a positive answer to~\cite[Conjecture~3.5]{GV23}: we prove that every hereditarily atomic monoid satisfies the ACCP (Theorem~\ref{thm:HA implies ACCP}). In the special class of reduced cancellative and commutative monoids, this implies that being hereditarily atomic and satisfying the ACCP are equivalent conditions. In Section~\ref{sec:BF and FF}, we consider the bounded factorization property, and we argue that if every undermonoid of a given monoid is a BFM, then every submonoid of the same monoid is a BFM (Theorem~\ref{thm:under-BFM = hereditarily BFM}). We were unable to establish a similar result for the finite factorization property, and we highlight this as an open question for interested readers (Question~\ref{quest:under and hereditary FFM}). In Section~\ref{sec:HFM and LFM}, we prove that for any given monoid, if all its undermonoids are half-factorial (resp., length-factorial) monoids, then all its submonoids are half-factorial (resp., length-factorial) monoids (Theorems~\ref{thm:undermonoids and the HFM} and~\ref{thm:undermonoids and the LFM}). At the end of each section, we emphasize that the studied hereditary properties are not equivalent to the corresponding ring theoretical hereditary properties when we restrict to the class of integral domains.

\bigskip
\section{Background}
\label{sec:background}

In this section, we introduce most of the relevant concepts related to commutative monoids and factorization theory that are required to follow the results we will presented later. Background in commutative monoids and semigroups can be found in~\cite{pG01}, while background in atomicity and factorization theory can be found in~\cite{GH06}.

\smallskip
\subsection{General Notation}

We let $\nn$ denote the set of positive integers, and we set $\nn_0 := \{0\} \cup \nn$. In addition, we denote the set of primes by $\pp$. As it is customary, we let $\zz, \qq$, and $\rr$ denote the sets of integers, rational numbers, and real numbers, respectively. If $b,c \in \zz$ with $b \le c$, then we let $\ldb b,c \rdb$ denote the discrete interval from $b$ to $c$; that is,
\[
	\ldb b,c \rdb := \{n \in \zz : b \le n \le c\}.
\]
For a subset $R$ of $\rr$ and $\ell \in \rr$, we set $R_{\ge \ell} := \{r \in R : r \ge \ell\}$ and, in a similar manner, we use the notation $R_{> \ell}$. For $q \in \qq_{> 0}$, we let $\mathsf{n}(q)$ and $\mathsf{d}(q)$ denote the unique positive integers such that $q = \mathsf{n}(q)/\mathsf{d}(q)$ and $\gcd(\mathsf{n}(q), \mathsf{d}(q)) = 1$. If $Q$ is a subset of $\qq_{> 0}$, then we set $\mathsf{d}(Q) := \{\mathsf{d}(q) : q \in Q\}$. 
For disjoint sets $S$ and~$T$ we often write $S \sqcup T$ instead of $S \cup T$ to emphasize that we are taking the union of disjoint sets.

\smallskip
\subsection{Commutative Monoids}

Throughout this paper, we reserve the term \emph{monoid} for a cancellative and commutative semigroup with identity and, unless otherwise is clear from the context, we will use additive notation for monoids. Let $M$ be a monoid. We set $M^\bullet := M \setminus \{0\}$ and, as for groups, we say that $M$ is \emph{trivial} if $M = \{0\}$. We let $\uu(M)$ denote the group of invertible elements of $M$. If $\uu(M)$ is trivial, then we say that $M$ is \emph{reduced}. The \emph{Grothendieck group} of $M$ is denoted by $\gp(M)$; that is, $\gp(M)$ is the unique abelian group up to isomorphism satisfying that any abelian group containing an isomorphic image of $M$ also contains an isomorphic image of $\gp(M)$. 
The monoid $M$ is \emph{torsion-free} if $\gp(M)$ is a torsion-free group. The \emph{order} of an element of $M$ is its order as an element of $\gp(M)$. Observe that if every element of $M$ has finite order, then $M$ must be a group. A submonoid $N$ of $M$ is called an \emph{undermonoid} if $\gp(N) = \gp(M)$, where $\gp(N)$ denotes the natural copy of the Grothendieck group of $N$ contained in $\gp(M)$. Undermonoids play a central role in the context of this paper. 

\smallskip

For $b,c \in M$, we say that $b$ \emph{divides} $c$ \emph{in} $M$ and write $b \mid_M c$ provided that $c = b + b'$ for some $b' \in M$ (we write $b \nmid_M c$ when $b$ does not divide $c$ in $M$). 
For a subset $S$ of $M$, we let $\langle S \rangle$ denote the smallest submonoid of $M$ containing $S$. 
\smallskip

An element $a \in M \setminus \uu(M)$ is called an \emph{atom} if whenever $a = b+c$ for some $b,c \in M$ either $b \in \uu(M)$ or $c \in \uu(M)$. We let $\mathcal{A}(M)$ denote the set consisting of all the atoms of $M$. Note that if $M$ is reduced, then $\mathcal{A}(M)$ must be contained in every generating set of~$M$. 
An element of $b \in M$ is called \emph{atomic} if either $b$ is invertible or if $b$ can be written as a sum of atoms. Following \cite{pC68}, we say that $M$ is \emph{atomic} if every element of $M$ is atomic. In addition, following Coykendall et al.~\cite{CGH21}, we say that $M$ is \emph{hereditarily atomic} if every submonoid of $M$ is atomic. 
\smallskip

A subset $I$ of $M$ is called an \emph{ideal} of~$M$ if $I + M \subseteq I$ or, equivalently, if $I + M = I$. If~$I$ is an ideal of~$M$ such that $I = b + M := \{b\} + M$ for some $b \in M$, then $I$ is called \emph{principal}. The monoid $M$ satisfies the \emph{ascending chain condition on principal ideals} (ACCP) if every ascending chain $(b_n + M)_{n \ge 1}$ of principal ideals of $M$ eventually stabilizes; that is, there is an $n_0 \in \nn$ such that $b_n + M = b_{n+1} + M$ for every $n \ge n_0$. Every monoid satisfying the ACCP is atomic \cite[Proposition~1.1.4]{GH06}. The converse does not hold. The first study of the connection of atomicity and the ACCP was carried out in 1974 by Grams in~\cite{aG74}, where she constructed the first example of an atomic integral domain that does not satisfy the ACCP. For a generous insight of the connection of atomicity and the ACCP, see \cite{GL23a} and references therein. It was recently proved in~\cite[Theorem~3.1]{GV23} that every hereditarily atomic torsion-free monoid satisfies the ACCP. 
\smallskip

\subsection{Factorizations}

Let $M$ be an atomic monoid. The free commutative monoid on $\mathcal{A}(M/\uu(M))$ is denoted by $\mathsf{Z}(M)$, and the elements of $\mathsf{Z}(M)$ are called \emph{factorizations}. If $z := a_1 + \cdots + a_\ell$ is a factorization in $\mathsf{Z}(M)$ for some $\ell \in \nn$ and $a_1, \dots, a_\ell \in \mathcal{A}(M/\uu(M))$, then~$\ell$ is called the \emph{length} of $z$; we often write $|z|$ instead of $\ell$. Let $\phi \colon \mathsf{Z}(M) \to M/\uu(M)$ be the unique monoid homomorphism satisfying $\phi(a) = a$ for all $a \in \mathcal{A}(M/\uu(M))$. For each $b \in M$, we set
\[
	\mathsf{Z}(b) := \mathsf{Z}_M(b) := \phi^{-1}(b + \uu(M)) \subseteq \mathsf{Z}(M).
\]
Observe that $M$ is atomic if and only if $\mathsf{Z}(b)$ is nonempty for all $b \in M$ (notice that $\mathsf{Z}(u) = \{\emptyset\}$ for every $u \in \uu(M)$). The monoid $M$ is called a \emph{unique factorization monoid} (UFM) if $|\mathsf{Z}(b)| = 1$ for all $b \in M$, while $M$ is called a \emph{finite factorization monoid} (FFM) if $1 \le |\mathsf{Z}(b)| < \infty$ for all $b \in M$. For each $b \in M$, we set
\[
	\mathsf{L}(b) := \mathsf{L}_M(b) := \{|z| : z \in \mathsf{Z}(b)\}.
\]
The monoid $M$ is called a \emph{half-factorial monoid} (HFM) if $|\mathsf{L}(b)| = 1$ for all $b \in M$, while $M$ is called a \emph{bounded factorization monoid} (BFM) if $1 \le |\mathsf{L}(b)| < \infty$ for all $b \in M$. It follows directly from the definitions that every UFM is both an FFM and an HFM, and also that if a monoid is either an FFM or an HFM, then it has to be a BFM. In addition, one can readily show that every BFM satisfies the ACCP \cite[Corollary~1.3.3]{GH06}. See~\cite{AG22}, for a recent survey on the bounded and finite factorization properties focused on the setting of integral domains.
\smallskip

At various points throughout this paper we consider subrings and underrings of integral domains with the purpose of emphasizing that, unlike the case of many classical factorization properties, the hereditary properties we consider here do not immediately imply the corresponding properties for integral domains. Let $R$ be an integral domain. We let $R^*$ denote the multiplicative monoid of $R$, that is, the multiplicative monoid consisting of all nonzero elements of $R$. We say that $R$ is \emph{atomic} (resp., a BFD, an HFD, or an FFD) provided that the monoid $R^*$ is atomic (resp., a BFM, an HFM, or an FFM).

\bigskip
\section{Underatomicity and Hereditary Atomicity}
\label{sec:HA=UA}

Let $M$ be a monoid. The primary purpose of this section is to determine whether the fact that every undermonoid of $M$ is atomic suffices to conclude that $M$ is hereditarily atomic. With this in mind, we make the following definition.

\begin{definition}
	A monoid $M$ is \emph{underatomic} if every undermonoid of $M$ is atomic.
\end{definition}

It follows directly from the definitions that every hereditarily atomic monoid is underatomic. As the main result of this section, we prove that the notions of underatomicity and hereditary atomicity are indeed equivalent. In order to establish this main result, we need to introduce an auxiliary poset based on a given monoid and a fixed element. Fix an element $b \in M$ that is not atomic in some submonoid of~$M$, and consider the following set:
\begin{equation} \label{eq:aux poset}
	\mathcal{S}_b := \big\{ S : S \text{ is a submonoid of } M \text{ and } b \in S \text{ is not atomic in } S \big\}.
\end{equation}
The set $\mathcal{S}_b$ is, therefore, nonempty. Now we consider the binary relation $\preceq_b$ on the set~$\mathcal{S}_b$ defined as follows: for any submonoids $S_1$ and $S_2$ contained in $\mathcal{S}_b$,
\begin{equation} \label{eq:aux binary relation}
	S_1 \preceq_b S_2 \quad \text{if} \quad S_1 \subseteq S_2 \ \ \text{and} \ \ \uu(S_1) = S_1 \cap \uu(S_2).
\end{equation}

\begin{lemma} \label{lem:poset with Zorn's lemma}
	Let $M$ be a monoid, and let $b$ be an element of $M$ that is not atomic in some submonoid of~$M$. If $\mathcal{S}_b$ and $\preceq_b$ are defined as in \eqref{eq:aux poset} and~\eqref{eq:aux binary relation}, respectively, then the following statements hold.
	\begin{enumerate}
		\item $\mathcal{S}_b$ is a nonempty poset under the binary relation $\preceq_b$.
		\smallskip
		
		\item $\mathcal{S}_b$ has a maximal element. 
	\end{enumerate}
\end{lemma}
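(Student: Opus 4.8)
The plan is to verify the order axioms directly for part~(1) and then apply Zorn's lemma for part~(2), with the compatibility condition on units built into~\eqref{eq:aux binary relation} doing the essential work at the top of each chain. For part~(1), nonemptiness of $\mathcal{S}_b$ is immediate from the hypothesis that $b$ fails to be atomic in some submonoid of $M$. Reflexivity of $\preceq_b$ reduces to the identity $S \cap \uu(S) = \uu(S)$, and antisymmetry is inherited from set inclusion. The only axiom requiring an argument is transitivity: given $S_1 \preceq_b S_2 \preceq_b S_3$, I would combine $\uu(S_1) = S_1 \cap \uu(S_2)$ with $\uu(S_2) = S_2 \cap \uu(S_3)$ and then use $S_1 \subseteq S_2$ to collapse $S_1 \cap (S_2 \cap \uu(S_3))$ to $S_1 \cap \uu(S_3)$, yielding $\uu(S_1) = S_1 \cap \uu(S_3)$.

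For part~(2), let $\mathcal{C}$ be a chain in $(\mathcal{S}_b, \preceq_b)$ and set $S := \bigcup_{C \in \mathcal{C}} C$. Since every $\preceq_b$-chain is in particular a $\subseteq$-chain, $S$ is a submonoid of $M$, closure under addition following by pushing any two given elements into a common member of the chain. That $b$ remains non-invertible in $S$ is easy: a relation $b + c = 0$ with $c \in S$ would already hold inside some $C \in \mathcal{C}$, making $b$ invertible, hence atomic, in $C$ and contradicting $C \in \mathcal{S}_b$.

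The step I expect to be the main obstacle is to show that $b$ cannot be written as a sum of atoms of $S$, which is where the units condition becomes indispensable. Suppose for contradiction that $b = a_1 + \cdots + a_n$ with each $a_i \in \mathcal{A}(S)$. Choosing a member $C \in \mathcal{C}$ that simultaneously contains $b$ and all the $a_i$ (possible since $\mathcal{C}$ is totally ordered and only finitely many elements are involved), I would argue that each $a_i$ is in fact an atom of $C$. The point is that if $a_i = x + y$ in $C$ with, say, $x \in \uu(S)$, then writing $x + x' = 0$ with $x' \in S$ and passing to a member $C' \in \mathcal{C}$ lying above both $C$ and a member containing $x'$, one obtains $x \in \uu(C')$; since $C \preceq_b C'$, this forces $x \in C \cap \uu(C') = \uu(C)$. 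Hence every factorization of $a_i$ in $C$ is trivial, so $a_i \in \mathcal{A}(C)$, and then $b = a_1 + \cdots + a_n$ exhibits $b$ as a sum of atoms of $C$, contradicting $C \in \mathcal{S}_b$. Therefore $S \in \mathcal{S}_b$.

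Finally, I would check that $S$ is an upper bound for $\mathcal{C}$ in the poset, that is, $C \preceq_b S$ for every $C \in \mathcal{C}$. Only the equality $\uu(C) = C \cap \uu(S)$ needs attention, and the inclusion $C \cap \uu(S) \subseteq \uu(C)$ follows by exactly the same descent-of-units argument used above, while the reverse inclusion is routine. With every chain thus bounded above inside $\mathcal{S}_b$, Zorn's lemma yields a maximal element, completing the proof.
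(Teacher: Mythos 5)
Your proposal is correct and follows essentially the same route as the paper: verify transitivity via the two units equalities, take the union of a chain, use the chain structure to descend units of the union to units of a fixed member (which is exactly how the paper establishes $\uu(S_i) = S_i \cap \uu(S)$), push a hypothetical atomic decomposition of $b$ into a single member of the chain, and invoke Zorn's lemma. The only (harmless) omissions are the explicit check that each $a_i$ is a non-unit of the chosen chain member $C$ (immediate from $\uu(C)\subseteq\uu(S)$ and $a_i\in\mathcal{A}(S)$) and the remark that nonemptiness of $\mathcal{S}_b$ covers the empty chain.
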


\begin{proof}
	To ease the notation of this proof, let $\mathcal{S}$ and $\preceq$ stand for $\mathcal{S}_b$ and $\preceq_b$, respectively.
	\smallskip
	
	(1) It is clear that $\preceq$ is reflexive and antisymmetric. Therefore proving that $\preceq$ is a partial order on $\mathcal{S}$ amounts to checking that $\preceq$ is transitive. To see this, suppose that $S_1 \preceq S_2$ and $S_2 \preceq S_3$ for some $S_1,S_2, S_3 \in \mathcal{S}$. It is clear that $S_1 \subseteq S_3$. In addition, observe that $\uu(S_1) \subseteq S_1 \cap \uu(S_3)$. For the reverse inclusion, pick $x \in S_1 \cap \uu(S_3)$. As $S_2 \preceq S_3$, it follows that $x \in S_2 \cap \uu(S_3) = \uu(S_2)$. This implies that $x \in S_1 \cap \uu(S_2) = \uu(S_1)$ because $S_1 \preceq S_2$, which in turn allows us to conclude that $S_1 \cap \uu(S_3) \subseteq \uu(S_1)$. As a result, $\preceq$ is transitive, and so $\preceq$ turns $\mathcal{S}$ into a nonempty poset.
	\smallskip
	
	(2) Let us argue that every nonempty chain in the poset $\mathcal{S}$ has an upper bound. For this, let $(S_i)_{i \in I}$ be a chain in~$\mathcal{S}$ for some nonempty set of indices $I$. We claim that
	\[
		S := \bigcup_{i \in I} S_i
	\]
	is an upper bound for the chain $(S_i)_{i \in I}$ in~$\mathcal{S}$. From the fact that $(S_i)_{i \in I}$ is a chain, we can immediately deduce that $S$ is a submonoid of $M$. Let us verify now that, for each $i \in I$, the equality
	\begin{equation} \label{eq:aux equality}
		\uu(S_i) = S_i  \cap \uu(S)
	\end{equation}
	holds. To do so, fix $i \in I$: as it is clear that $\uu(S_i) \subseteq S_i  \cap \uu(S)$, it suffices to show that the inclusion $S_i  \cap \uu(S) \subseteq \uu(S_i)$ holds. Pick $x \in S_i  \cap \uu(S)$. Then there exists $y \in S$ such that $x+y = 0$. After taking $j \in I$ such that $x,y \in S_j$ and $S_i \preceq S_j$, we see that $x \in S_i \cap \uu(S_j) = \uu(S_i)$. Hence the inclusion $S_i  \cap \uu(S) \subseteq \uu(S_i)$ holds, as desired. 
	
	Now suppose, by way of contradiction, that $S$ does not belong to $\mathcal{S}$. Since $S$ is a submonoid of $M$ and $b \in S$ (indeed, $b \in S_i$ for all $i \in I$), $b$ must be a non-atomic element of $S$. Since $b \in S_i \setminus \uu(S_i)$ for any $i \in I$, it follows from~\eqref{eq:aux equality} that $b \notin \uu(S)$. Therefore we can write $b = a_1 + \dots + a_n$ for some $a_1, \dots, a_n \in \mathcal{A}(S)$. As $(S_i)_{i \in I}$ is a chain, we can pick an index $k \in I$ such that $a_1, \dots, a_n \in S_k$. We claim that $a_1, \dots, a_n \in \mathcal{A}(S_k)$. To prove our claim, fix $\ell \in \ldb 1,n \rdb$, and let us argue that $a_\ell \in \mathcal{A}(S_k)$. The fact that $a_\ell \notin \uu(S)$ guarantees that $a_\ell \notin \uu(S_k)$. Now write $a_\ell = c+d$ for some $c,d \in S_k$. Because $a_\ell \in \mathcal{A}(S)$, either $c \in \uu(S)$ or $d \in \uu(S)$. If we assume that $c \in \uu(S)$, then in light of~\eqref{eq:aux equality}, we obtain that $c \in S_k \cap \uu(S) = \uu(S_k)$. We can similarly conclude that $d \in \uu(S_k)$ if we assume that $d \in \uu(S)$. Therefore $a_\ell \in \mathcal{A}(S_k)$, as desired. Hence $a_1, \dots, a_n \in \mathcal{A}(S_k)$, and so the equality $b = a_1 + \dots + a_n$ contradicts that $b$ is not an atomic element of $S_k$. As a consequence, $S$ belongs to $\mathcal{S}$.
	
	Since $S$ belongs to $\mathcal{S}$, it follows from~\eqref{eq:aux equality} that $S_i \preceq S$ for all $i \in I$, whence $S$ is an upper bound of the chain $(S_i)_{i \in I}$ in $\mathcal{S}$. Thus, $\mathcal{S}$ is a poset satisfying the condition in Zorn’s lemma, and so $\mathcal{S}$ contains a maximal element~$S$. 
\end{proof}

We are in a position to establish the primary result of this section.

\begin{theorem} \label{thm:HA=UA}
	For a monoid $M$, the following conditions are equivalent.
	\begin{enumerate}
		\item[(a)] $M$ is underatomic.
		\smallskip
		
		\item[(b)] $M$ is hereditarily atomic.
	\end{enumerate}
\end{theorem}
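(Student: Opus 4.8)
The two implications are of very different character. The implication (b) $\Rightarrow$ (a) is immediate: every undermonoid is in particular a submonoid, so if every submonoid of $M$ is atomic then so is every undermonoid. The content lies in (a) $\Rightarrow$ (b), which I would prove in contrapositive form: assuming that $M$ is not hereditarily atomic, I would produce a single non-atomic undermonoid, which shows that $M$ is not underatomic. Since $M$ fails to be hereditarily atomic, some submonoid of $M$ contains an element $b$ that is not atomic in it; hence the set $\mathcal{S}_b$ of~\eqref{eq:aux poset} is nonempty, and Lemma~\ref{lem:poset with Zorn's lemma} furnishes a $\preceq_b$-maximal element $S \in \mathcal{S}_b$. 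The whole argument then reduces to the following claim: \emph{the maximal monoid $S$ is an undermonoid of $M$}, that is, $\gp(S) = \gp(M)$. Granting this, $S$ is an undermonoid in which $b$ is not atomic, so $M$ is not underatomic, as desired.

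To prove the claim I would argue by contradiction, assuming $\gp(S) \subsetneq \gp(M)$ and exhibiting a strict extension of $S$ inside $\mathcal{S}_b$, which contradicts maximality. Write $G := \gp(M)$ and $H := \gp(S)$, and let $\pi \colon G \to G/H$ be the quotient map; by assumption $G/H$ is nontrivial, and since $M$ generates $G$ there is some $x \in M$ with $\pi(x) \ne 0$. The candidate extension is $S' := \langle S \cup \{x\} \rangle = \{ s + nx : s \in S, \ n \in \nn_0 \}$, and to conclude $S \prec_b S'$ I must check two things: the unit condition $\uu(S) = S \cap \uu(S')$ (so that $S \preceq_b S'$), and that $b$ remains non-atomic in $S'$ (so that $S' \in \mathcal{S}_b$); strictness is clear since $x \in S' \setminus S$. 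The governing computation is that, after projecting along $\pi$, any identity $b = \sum_{i} a_i$ with $a_i = s_i + n_i x \in \mathcal{A}(S')$ forces $\big( \sum_i n_i \big)\, \pi(x) = 0$, while any unit relation $-t = s' + n'x$ with $t \in S \cap \uu(S')$ forces $n'\, \pi(x) = 0$.

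When $\pi(x)$ has infinite order — in particular whenever $G/H$ is not a torsion group, since otherwise the images of elements of $M$ would generate a torsion group — both verifications are immediate. Indeed, $\big( \sum_i n_i \big)\pi(x) = 0$ gives $\sum_i n_i = 0$ and hence every $n_i = 0$, so each $a_i$ lies in $S$ and, by the unit condition, is an atom of $S$ (recall $b$ is a non-unit of $S'$ by the unit condition, as $b \notin \uu(S)$); this exhibits $b$ as a sum of atoms of $S$, contradicting that $b$ is not atomic in $S$. The same projection shows $n' = 0$, which yields the unit condition. Thus the infinite-order case produces the required contradiction directly.

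The main obstacle is the torsion case, where $G/H$ is a nontrivial torsion group and every $x \in M \setminus H$ has finite order modulo $H$; this is exactly the phenomenon absent from the torsion-free setting of~\cite{GV23}. Here I would first choose $x \in M$ so that the order $p$ of $\pi(x)$ is minimal among the nonzero values $\{\,\text{ord}(\pi(y)) : y \in M,\ \pi(y) \ne 0\,\}$; this $p$ is forced to be prime, since if $p = ab$ with $a, b > 1$ then $ax \in M$ has $\pi(ax) \ne 0$ of smaller order $b$, contradicting minimality. Now $px \in H$ and the projection only yields $\sum_i n_i \equiv 0 \pmod p$, so a factorization of $b$ in $S'$ with $\sum_i n_i \in \{p, 2p, \dots\}$ is a priori possible and is precisely what must be ruled out. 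The plan is to isolate the obstruction inside the single extension $\tilde S := S + \nn_0(px) \subseteq H$, which satisfies $\gp(\tilde S) = \gp(S)$: one checks that $S \cap \uu(S') = S \cap \uu(\tilde S)$, so that the unit condition for $S'$ is equivalent to the smaller unit condition for $\tilde S$, and that any $S'$-factorization of $b$ decomposes as a genuine sum of atoms of $S$ plus a remainder lying in $\tilde S$. Maximality of $S$, applied to the within-$H$ extension $\tilde S$ together with the minimal-prime choice of $x$, should then force every such factorization back into $S$, contradicting the non-atomicity of $b$ in $S$. Carrying out this reduction carefully — in particular verifying that the remainder is genuinely atomic in $S$ and that no spurious units are created in passing to $\tilde S$ — is the technical heart of the argument and the one place where the absence of the torsion-free hypothesis must really be confronted.
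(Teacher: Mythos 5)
Your direction (b) $\Rightarrow$ (a), your reduction to a $\preceq_b$-maximal element $S$ of $\mathcal{S}_b$, and your treatment of the case where some $x \in M$ has infinite order modulo $\gp(S)$ all track the paper's Case~2 correctly. But the torsion case is a genuine gap rather than a deferred technicality, and it is exactly the new difficulty this theorem resolves beyond the torsion-free result of~\cite{GV23}. To contradict maximality you must \emph{exhibit} a strict extension of $S$ lying in $\mathcal{S}_b$, and your candidate $S' = \langle S \cup \{x\}\rangle$ need not lie there: adjoining $x$ can genuinely make $b$ atomic in $S'$ (for instance $x$ and $b-x$ may both become atoms of $S'$), or can create new units (if $nx + q = 0$ for some $n \in \nn$ and $q \in S$), and neither event contradicts anything. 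Your proposed rescue --- applying maximality of $S$ to $\tilde S := S + \nn_0(px)$ --- has the same defect one level down: maximality only bites if $\tilde S \in \mathcal{S}_b$ and $S \preceq_b \tilde S$, and there is no reason $b$ remains non-atomic, or that no units appear, once $px$ is adjoined. The sentence ``maximality \ldots should then force every such factorization back into $S$'' is therefore not an argument but a restatement of what must be proved. A further unjustified step: if $s_i + n_i x \in \mathcal{A}(S')$ with $n_i > 0$, the component $s_i$ need not be an atom, or even a non-unit, of $S$, so an $S'$-factorization of $b$ does not ``decompose as a genuine sum of atoms of $S$ plus a remainder.''

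The paper's resolution differs in two essential ways. First, the element adjoined is $2b+u$, not $u$: since it contains two copies of $b$, \emph{any} appearance of the new generator in a factorization of $b$, or in a unit relation, produces an identity of the form $ku + (\text{positive multiple of } b) + (\text{element of } S) = 0$; this shows condition~(ii) fails, so some $n_0 u \in S$, and multiplying through by $n_0$ forces $b \in \uu(S)$ --- a contradiction available with no hypothesis on the order of $u$. This yields the key claim that $2b+u \in S$ for every $u$ satisfying (i) or (ii). Second, the endgame adjoins nothing: it takes $v \in M \setminus \gp(S)$ and applies the claim to both $v$ and $-v$, extracting from the failure of~(i) for $v$ that $nv \notin S$ for all $n \in \nn$, and from the failure of~(ii) for $-v$ that $n_0 v \in S$ for some $n_0$ --- an immediate contradiction. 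That last step requires $-v \in M$, which is why the paper handles the case where $M$ is not a group by an entirely separate construction (the undermonoid $M' = N \cup I$ with $I = \{m \in M : m \nmid_M b\}$), a case your proposal does not isolate.
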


\begin{proof}
	(b) $\Rightarrow$ (a): This follows directly from the definitions.
	\smallskip
	
	(a) $\Rightarrow$ (b): Assume now that $M$ is underatomic. Suppose, by way of contradiction, that $M$ is not hereditarily atomic. We split the rest of the proof into the following two cases.
	\smallskip
	
	\noindent \textsc{Case 1:} $M$ is not a group. Let $N$ be a submonoid of $M$ that is not atomic, and take a non-invertible element $b \in N$ that does not factor into atoms in $N$. Now consider the set
	\[
		I := \{m \in M : m \nmid_M b \}.
	\]
	Notice that if $m \in M$ is not invertible, then $b + m \nmid_ M b$, and so $b + m \in I$. Thus, the fact that~$M$ is not a group guarantees that $I$ is nonempty. In addition, observe that $I$ cannot contain any invertible element of $M$. Lastly, it is clear that $I + M \subseteq I$. As a result, $I$ is a nonempty ideal of $M$ such that $I \subseteq M \setminus \uu(M)$.
	
	Now set $M' := N \cup I$. We claim that $M'$ is an undermonoid of $M$. Since $N$ is a submonoid and $I$ is an ideal, $M'$ is a submonoid of $M$. Now consider an arbitrary element of $\gp(M)$ and write it as $c-d$ for some $c,d \in M$. Since $I$ is nonempty, we can take $e \in I$ and observe that the fact that $e + c, e+d \in I$ ensures that $c-d = (e+c) - (e+d) \in I - I \subseteq \gp(M')$. Hence $\gp(M') = \gp(M)$, and so $M'$ is an undermonoid of~$M$, as claimed.
	
	Clearly, $\uu(N)\subseteq \uu(M')$. The reverse inclusion also holds because $I \cap \uu(M)$ is empty. Hence $\uu(M') = \uu(N)$. This in turn implies that $\mathcal{A}(M') \cap N \subseteq \mathcal{A}(N)$. We proceed to argue that $M'$ is not atomic, which amounts to showing that $b$ cannot be written as a sum of atoms in $M'$ (recall that $b \in N \subseteq M'$). Indeed, if $b = a'_1 + \dots + a'_\ell$ for some $a_1, \dots, a'_\ell \in \mathcal{A}(M')$, then the fact that $a'_i \mid_M b$ for every $i \in \ldb 1,\ell \rdb$ would imply that $a'_1, \dots, a'_\ell \notin I$, which in turn would imply that
	\[
		a'_1, \dots, a'_\ell \in \mathcal{A}(M') \cap N \subseteq \mathcal{A}(N).
	\]
	However, this is not possible because~$b$ cannot be written as a sum of atoms in $N$. As a consequence, the monoid $M'$ is not atomic. Finally, the fact that $M'$ is an undermonoid of $M$ contradicts the underatomicity of $M$.
	\smallskip
	
	\noindent \textsc{Case 2:} $M$ is a group. Since $M$ is not hereditarily atomic, there exists a submonoid $N$ of $M$ and a non-invertible element $b \in N$ that is not atomic in $N$. Let $\mathcal{S}_b$ and $\preceq_b$ be as in~\eqref{eq:aux poset} and \eqref{eq:aux binary relation}, respectively, and in order to simplify our notation set $\mathcal{S} := \mathcal{S}_b$ and $\preceq \, := \, \preceq_b$. It follows from Lemma~\ref{lem:poset with Zorn's lemma} that~$\mathcal{S}$ is a nonempty poset under the binary relation $\preceq$ and also that the poset $\mathcal{S}$ has a maximal element~$S$. Therefore $b \in S$ but $b$ is not atomic in $S$, which implies that $b \in S \setminus \uu(S)$. 

	Our next step is to argue that if there exists $u \in M$ satisfying at least one of the conditions~(i) and~(ii) below, then $2b + u \in S$:
	\begin{enumerate}
		\item[(i)] there exists $n_0  \in \nn$ with $n_0 u \in S$;
		\smallskip
		
		\item[(ii)] $nu+q \neq 0$ for any $n \in \nn$ and $q \in S$.
	\end{enumerate}
	To prove this, take $u \in M$ satisfying at least one of the conditions~(i) and~(ii), set $S' := S + \nn_0(2b + u)$, and let us prove the following claims.
	\smallskip

	\noindent \textsc{Claim 1.} $\uu(S') = \uu(S)$ and $2b+u \notin \uu(S')$.
	\smallskip
	
	\noindent \textsc{Proof of Claim 1.} First, we will prove that $2b+u \notin \uu(S')$. Suppose, towards a contradiction, that this is not the case, and take $s \in S$ and $k \in \nn_0$ such that the equality $2b+u + s +k(2b+u)=0$ holds. Since $(k+1)u + (s + (2k+2)b) = 0$, condition~(ii) above does not hold, and so there must exist $n_0 \in \nn$ such that $m_0 := n_0 u \in S$.  Then, after multiplying both sides of the equality~$2b+u + s +k(2b+u)=0$ by $n_0$, we find that $(k+1)m_0 + n_0 s + (2k + 2)n_0 b = 0$, which contradicts that $b \notin \uu(S)$. Thus, $2b+u \notin \uu(S')$.
	
	Given that $2b+u \notin \uu(S')$, arguing the equality $\uu(S') = \uu(S)$ amounts to proving the inclusion $S \cap \uu(S') \subseteq \uu(S)$. To do so, pick $s \in S \cap \uu(S')$. Then there exist $t \in S$ and $k \in \nn_0$ such that $s + t + k(2b+u) = 0$. If $k=0$, then $s \in \uu(S)$, as desired. Therefore suppose towards a contradiction that $k \ge 1$. Since $ku + (s + t + 2kb) = 0$, condition~(ii) above does not hold, and so there exists $n_0 \in \nn$ such that $m_0 := n_0 u \in S$. Then after multiplying both sides of the equality $s + t + k(2b+u) = 0$ by $n_0$, we find that $n_0s + n_0t + k m_0 + 2kn_0 b = 0$, contradicting that $b \notin \uu(S)$. Hence $S \cap \uu(S') \subseteq \uu(S)$, and so $\uu(S') = \uu(S)$. Claim~1 is now established.
	\smallskip
	
	\noindent \textsc{Claim 2.} $\mathcal{A}(S') \subseteq \mathcal{A}(S) \cup (2b + u + \uu(S))$.
	\smallskip
	
	\noindent \textsc{Proof of Claim 2.} We first show that $\mathcal{A}(S') \cap S \subseteq \mathcal{A}(S)$. To do this, take $a \in \mathcal{A}(S') \cap S$. Since $a \notin \uu(S')$, it follows from Claim~1 that $a \notin \uu(S)$. Now write $a = c + d$ for some $c,d \in S$. Because $a \in \mathcal{A}(S')$, either $c \in \uu(S')$ or $d \in \uu(S')$ and, therefore, Claim~1 guarantees that either $c \in \uu(S)$ or $d \in \uu(S)$. Thus, $a \in  \mathcal{A}(S)$. Hence $\mathcal{A}(S') \cap S \subseteq \mathcal{A}(S)$, as desired. Now pick $a' \in \mathcal{A}(S') \setminus S$ and write $a' = s + k(2b+u)$ for some $s \in S$ and $k \in \nn_0$. Since $a' \notin S$, we see that $k \ge 1$, and so the fact that $2b+u \notin \uu(S')$, in tandem with $a' \in \mathcal{A}(S')$, ensures that $k=1$ and $s \in \uu(S') = \uu(S)$. Thus, $a' \in 2b+u + \uu(S)$. Hence 
	\[
		\mathcal{A}(S') = \big( \mathcal{A}(S') \cap S \big) \sqcup \big( \mathcal{A}(S') \setminus S \big) \subseteq \mathcal{A}(S) \cup (2b + u + \mathcal{U}(S)),
	\]
	and so Claim~2 is established.
	\smallskip
	
	\noindent \textsc{Claim 3.} $b \notin \langle \mathcal{A}(S') \rangle$. 
	\smallskip
	
	\noindent \textsc{Proof of Claim 3.} Suppose, by way of contradiction, that $b \in \langle \mathcal{A}(S') \rangle$. Then in light of Claim~2 we can write
	\begin{equation} \label{eq:claim 3}
		b = (a_1 + \dots + a_j) + k(2b+u) + w
	\end{equation}
	for some $k \in \nn_0$, atoms $a_1, \dots, a_j \in \mathcal{A}(S)$, and a unit $w \in \uu(S)$. Since $b$ is not an atomic element of~$S$, we can assume that $k \ge 1$. Since $ku + (a_1 + \dots + a_k) + (2k-1)b + w = 0$, condition~(ii) above is not satisfied and, therefore, there exists $n_0 \in \nn$ such that $m_0 := n_0u \in S$. After multiplying both sides of the equality \eqref{eq:claim 3} by $n_0$, we find that $km_0 +n_0(a_1 + \dots + a_j) + n_0w + (2k-1)n_0 b = 0$, contradicting that $b \notin \uu(S)$. Therefore Claim~3 is also established.
	\smallskip

	With the previous three claims validated, we are now in a position to check that $2b+u \in S$. By definition,~$S'$ is a submonoid of $M$ containing $S$, and so Claim~3 guarantees that $S'$ belongs to~$\mathcal{S}$. By virtue of Claim~1, the equality $\uu(S) = \uu(S')$ holds, and this implies that $S \preceq S'$. As a result, the maximality of $S$ in the poset $\mathcal{S}$ ensures that $S' = S$. As a consequence, the containment $2b+u \in S$ holds, as desired.
	
	We are now ready to conclude our proof. Since $S$ is not atomic, the fact that $M$ is underatomic implies that $\gp(S)$ is a proper subgroup of $M$. Then we can take $v \in M \setminus \gp(S)$. As $v \notin \gp(S)$, it follows that $2b \pm v \notin S$. Since $2b+v \notin S$, neither condition~(i) nor condition~(ii) above hold for $v$ (playing the role of $u$): in particular the fact that condition~(i) does not hold implies that $nv \notin S$ for any $n \in \nn$. On the other hand, the fact that $2b-v \notin S$ ensures that none of the same two conditions above hold for $-v$ (playing the role of $u$): in particular, the fact that condition~(ii) does not hold allows us to take $n_0 \in \nn$ and $q_0 \in S$ such that $n_0(-v) + q_0 = 0$. However, the equality $n_0 v = q_0 \in S$ contradicts that $n v \notin S$ for any $n \in \nn$. This concludes our proof.
\end{proof}

For a monoid $M$, it is clear that if $\gp(M)$ is hereditarily atomic, then so is $M$. As the following example illustrates, the converse does not hold even in the class of rank-$1$ monoids.

\begin{example} \label{ex:HA monoid with non-HA group}
	Consider the monoid $M := \{0\} \cup \qq_{\ge 1}$ (under the standard addition). For each submonoid~$N$ of $M$, the fact that $0$ is not a limit point of $N^\bullet$ implies that $N$ is a BFM (by \cite[Proposition~4.5]{fG19}) and, therefore, $N$ is atomic. Hence $M$ is hereditarily atomic (this also follows from the fact that $M$ satisfies the ACCP via Corollary~\ref{cor:HA=ACCP for reduced monoids}). Now observe that $\gp(M) = \qq$, and $\qq$ has many non-atomic submonoids, the simplest one being $\qq_{\ge 0}$ (it follows from \cite[Theorem~3.2]{fG23} that the only torsion-free abelian group that is hereditarily atomic is $\zz$).
\end{example}

Hereditary atomicity in the context of integral domains was recently studied by Coykendall, Hasenauer, and the first author in~\cite{CGH21}. In the same paper, an integral domain $R$ is defined to be \emph{hereditarily atomic} provided that every subring of $R$ is atomic. Observe that if the multiplicative monoid $R^*$ is hereditarily atomic, then $R$ is a hereditarily atomic integral domain. The converse does not hold in general as the next two examples illustrate.

\begin{example} \label{ex:HAD not m-HA zero char}
	It is well known that every subring of the field $\qq$ can be obtained as a localization of $\zz$, and so every subring of $\qq$ is Noetherian. As Noetherian domains are atomic,~$\qq$ is a hereditarily atomic domain (for a characterization of fields that are hereditarily atomic, see~\cite[Theorem~4.4]{CGH21}). Let us exhibit a submonoid of the multiplicative group $\qq^*$ that is not atomic. Consider the submonoid $M := \qq_{\ge 1}$ of $\qq^*$. Since $1 = \min M$, we see that $M$ is reduced. Now for each $q \in M_{>1}$, we can take $n \in \nn$ large enough so that $\frac n{n+1} q > 1$, and so we can write~$q$ as a product of two non-units in $M$, namely, $q = \big(\frac{n+1}{n} \big) \cdot \big(\frac{n}{n+1}q \big)$. Therefore no element of $M$ is an atom, which implies that~$M$ is not atomic (as $M$ is not a group).
\end{example}

Let us take a look at another example, this time in positive characteristic.

\begin{example}  \label{ex:HAD not m-HA positive char}
	Consider the submonoid $M := \zz \times \nn_0$ of the abelian group $\zz^2$. Since $M/\uu(M)$ is isomorphic to $(\nn_0,+)$, the monoid $M$ satisfies the ACCP. Now consider the submonoid
	\begin{equation} \label{eq:nonnegative cone of Z^2}
		S := (\nn_0 \times \{0\}) \sqcup (\zz \times \nn)
	\end{equation}
	of $M$. Observe that $S$ is the nonnegative cone of $(\zz^2,+)$ under the lexicographical order with priority on the second coordinate. Therefore $S$ is reduced and the only atom of $S$ is the minimum of $S^\bullet$, namely, $(1,0)$. Therefore no element in $\zz \times \nn$ can be written as a sum of atoms in~$S$, and so $S$ is not atomic. Hence $M$ is not a hereditarily atomic monoid.
	
	Fix $p \in \pp$. It follows from \cite[Theorem~5.10]{CGH21} that the ring of Laurent polynomials $\ff_p[x^{\pm 1}]$ is a hereditarily atomic domain. We will argue that the multiplicative monoid $\ff_p[x^{\pm 1}]^*$ is not a hereditarily atomic monoid. We have seen in the previous paragraph that $S$ is not atomic. In addition, we can readily verify that the map given by the assignments $(m,n) \mapsto x^m (x+1)^n$ induces a monoid isomorphism from~$S$ to the submonoid $\{x^m (x+1)^n \mid (m,n) \in S\}$ of $\ff_p[x^{\pm 1}]^*$. Therefore the fact that $S$ is not atomic ensures that $\ff_p[x^{\pm 1}]^*$ is not a hereditarily atomic monoid.
\end{example}

\bigskip
\section{Ascending Chain Condition on Principal Ideals}
\label{sec:connection to the ACCP}

It was proved in~\cite{GV23} that in the class of torsion-free monoids, every hereditarily atomic monoid satisfies the ACCP. As the main result of this section, we generalize the mentioned result: we prove that every hereditarily atomic monoid (i.e., every underatomic monoid) satisfies the ACCP. First, we need to argue a technical lemma.
\smallskip

Fix $r \in \nn$. For $v,w \in \nn_0^r$, we write $v \preceq w$ if $w - v \in \nn_0^r$, that is, if $v$ divides $w$ in the additive monoid $\nn_0^r$. Clearly, $(\nn_0^r, \preceq)$ is a poset. Dickson's lemma, which is a useful result from combinatorics, states that every subposet of $(\nn_0^r, \preceq)$ has finitely many minimal elements. Before establishing the main result of this section, we need the following lemma, whose proof is based on Dickson's lemma.

\begin{lemma} \label{lem:HA implies ACCP aux}
	For any $r \in \nn$, if $X$ is an infinite subset of $\nn_0^r$, then there exists a sequence $(x_n)_{n \ge 1}$ with terms in $X$ such that $x_n \prec x_{n+1}$ for every $n \in \nn$.
\end{lemma}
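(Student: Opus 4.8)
The plan is to construct the desired strictly ascending chain greedily, invoking Dickson's lemma at each stage to guarantee that an infinite reservoir of admissible candidates always remains. The key observation I would isolate first is the following claim: \emph{every infinite subset $Y$ of $\nn_0^r$ contains an element $m$ such that the set $\{y \in Y : m \preceq y\}$ is infinite.} To prove this claim, I would first remark that in $\nn_0^r$ there are no infinite strictly descending chains, since for any $x = (x_1, \dots, x_r) \in \nn_0^r$ the set of elements that are $\preceq x$ is contained in the finite box $\ldb 0, x_1 \rdb \times \cdots \times \ldb 0, x_r \rdb$. Consequently, for each $y \in Y$ the set $\{z \in Y : z \preceq y\}$ is finite and nonempty, so it has a $\preceq$-minimal element, which is then minimal in $Y$; thus every element of $Y$ dominates some minimal element of $Y$. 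By Dickson's lemma, $Y$ has only finitely many minimal elements, and therefore the pigeonhole principle forces one of them, say $m$, to be dominated by infinitely many elements of $Y$. This establishes the claim.

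With the claim in hand, I would build the sequence $(x_n)_{n \ge 1}$ recursively together with a nested family of infinite sets. Setting $Y_0 := X$, the claim furnishes an element $x_1 \in Y_0$ for which $\{y \in Y_0 : x_1 \preceq y\}$ is infinite, and I would then put $Y_1 := \{y \in Y_0 : x_1 \prec y\}$, which is still infinite since it is obtained by deleting the single element $x_1$. Proceeding inductively, suppose $x_1 \prec \cdots \prec x_n$ have been chosen along with an infinite set $Y_n$ all of whose elements strictly dominate $x_n$. Applying the claim to $Y_n$ yields an element $x_{n+1} \in Y_n$ with infinitely many elements of $Y_n$ above it, and I set $Y_{n+1} := \{y \in Y_n : x_{n+1} \prec y\}$, which is again infinite. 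Because $x_{n+1} \in Y_n$ we have $x_n \prec x_{n+1}$, so the resulting sequence satisfies $x_n \prec x_{n+1}$ for every $n \in \nn$, exactly as desired.

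The step that demands the most care is the passage from ``infinitely many elements $\succeq m$'' to ``infinitely many elements $\succ m$'', that is, upgrading to strict dominance while preserving infiniteness; this is handled by simply discarding the unique element equal to the newly chosen $x_{n+1}$, which is the only member of the reservoir that could fail to strictly dominate it. The essential role of Dickson's lemma is to exclude the pathology of an infinite subset having infinitely many minimal elements, which is precisely what would break the pigeonhole step; the (trivial in $\nn_0^r$) absence of infinite descending chains is what lets me reduce an arbitrary element to a minimal one in the first place. No genuine estimate is needed—the argument is purely order-theoretic.
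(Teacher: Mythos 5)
Your proof is correct and follows essentially the same route as the paper's: Dickson's lemma yields finitely many minimal elements of the current infinite reservoir, the pigeonhole principle selects one dominated by infinitely many elements, and the construction iterates, discarding the chosen element to upgrade $\preceq$ to $\prec$. The only differences are cosmetic---you isolate the pigeonhole step as an explicit claim and justify via the finite-box observation that every element lies above a minimal one, a point the paper leaves implicit.
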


\begin{proof}
	Fix an infinite subset $X$ of $\nn_0^r$. We construct the desired sequence inductively. For $v \in \nn_0^r$, we let $I_v$ denote the principal ideal of $(\nn_0^r,+)$ generated by $v$; that is,
	\[
		 I_v := v + \nn_0^r = \{x \in \nn_0^r : v \preceq x \}.
	\]
	By Dickson's lemma, $X$ has a nonempty finite set of minimal elements as a subposet of $(\nn_0^r, \preceq)$: let these minimal elements be $y_1, \dots, y_\ell$. Therefore $X = \bigcup_{i=1}^\ell I_{y_i} \cap X$. Since $|X| = \infty$, there exists $j \in \ldb 1, \ell \rdb$ such that $|I_{y_j} \cap X| = \infty$. After setting $x_1 := y_j$, we see that $|I_{x_1} \cap X| = \infty$. For the inductive step, suppose that we have found $x_1, \dots, x_n \in X$ with $x_1 \prec x_2 \prec \dots \prec x_n$ such that $|I_{x_n} \cap X| = \infty$. Set $X_n := (I_{x_n} \cap X) \setminus \{x_n\}$. Then $|X_n| = \infty$, and so it follows from Dickson's lemma that $X_n$ has a nonempty finite set of minimal elements as a subposet of $(\nn_0^r, \preceq)$: let these minimal elements be $z_1, \dots, z_m$. Since $X_n = \bigcup_{i=1}^m I_{z_i} \cap X_n$, the fact that $|X_n| = \infty$ ensures that $|I_{z_k} \cap X_n| = \infty$ for some $k \in \ldb 1, m \rdb$. Set $x_{n+1} := z_k$. Because $x_{n+1} \in X_n \subseteq I_{x_n} \setminus \{x_n\}$, it follows that $x_n \prec x_{n+1}$. Also, the inclusion $I_{x_{n+1}} \cap X_n \subseteq I_{x_{n+1}} \cap X$ implies that $|I_{x_{n+1}} \cap X| = \infty$, which completes our inductive step. As a result, we can guarantee the existence of a sequence $(x_n)_{n \ge 1}$ with terms in $X$ such that $x_n \prec x_{n+1}$ for every $n \in \nn$.
\end{proof}

We are in a position to prove that every hereditarily atomic monoid satisfies the ACCP.

\begin{theorem} \label{thm:HA implies ACCP}
	Every hereditarily atomic monoid satisfies the ACCP.
\end{theorem}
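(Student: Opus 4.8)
The plan is to argue by contradiction: assuming $M$ is hereditarily atomic but fails the ACCP, I will manufacture a submonoid of $M$ that is not atomic, which is absurd. First I would dispose of the trivial case: if $M$ is a group, then its only principal ideal is $M$ itself, so the ACCP holds vacuously; hence I may assume $M$ is not a group. Since $M$ is a submonoid of itself, hereditary atomicity gives that $M$ is atomic. Now suppose the ACCP fails, so there is a non-stabilizing ascending chain of principal ideals; unwinding it produces elements with $b_n = b_{n+1} + c_n$ and each $c_n \notin \uu(M)$. Because $M$ is atomic I can refine this chain, splitting each non-unit step into atoms and interpolating, so that I may assume $b_n = b_{n+1} + a_n$ with $a_n \in \mathcal{A}(M)$ for all $n$; consequently $b_1 = b_{n+1} + (a_1 + \dots + a_n)$ for every $n \in \nn$. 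I would also record the elementary but crucial observation that every non-invertible element of $M$ has infinite order in $\gp(M)$ (for if $nx = 0$ with $n \ge 1$, then $x$ is a unit), so the atoms $a_i$, the partial sums $s_n := a_1 + \dots + a_n$, and the tails $b_n$ are all non-units of infinite order.

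Next I would introduce the candidate non-atomic submonoid
\[
  N := \langle b_1, b_2, b_3, \dots, a_1, a_2, a_3, \dots \rangle,
\]
and aim to show that $b_1$ is not a sum of atoms of $N$. Including the tails $b_m$ is what forces each relation $b_m = b_{m+1} + a_m$ to exhibit $b_m$ as a sum of two non-units of $N$, so that no $b_m$ is an atom of $N$; and $b_1$ itself lies in $N$. The entire difficulty is then to rule out a finite factorization $b_1 = w_1 + \dots + w_k$ into atoms $w_j \in \mathcal{A}(N)$.

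The analysis splits according to the image of $b_1$ in the quotient $\gp(M)/\gp(\langle a_1, a_2, \dots \rangle)$. If no nonzero multiple of $b_1$ lies in $\gp(\langle a_1, a_2, \dots \rangle)$, then the assignment $b_m \mapsto 1$ and $a_i \mapsto 0$ extends to a well-defined monoid homomorphism $d \colon N \to \nn_0$ (well-definedness is precisely the statement that $k b_1 \in \gp(\langle a_1, a_2, \dots \rangle)$ forces $k = 0$, since every relation among the generators reduces, via $b_m = b_1 - s_{m-1}$, to a multiple of $b_1$ landing in the subgroup generated by the atoms). Applying $d$ to a hypothetical factorization $b_1 = w_1 + \dots + w_k$ gives $\sum_j d(w_j) = 1$, so a single atom $w_{j_0}$ has $b$-degree one; writing out a non-negative representation then exhibits $w_{j_0} = b_{m_0} + t$ with $t \in \langle a_1, a_2, \dots \rangle$, and since $b_{m_0}$ is reducible this forces $w_{j_0}$ to be a sum of two non-units, contradicting that it is an atom. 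This settles the case in which $b_1$ has infinite order modulo the atoms.

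The remaining case, where some nonzero multiple of $b_1$ is an integer combination of finitely many atoms $a_1, \dots, a_r$, is the genuinely harder one and is exactly the obstruction absent from the torsion-free setting of \cite[Theorem~3.1]{GV23}; it is where I expect the main work to lie. Here the $b$-degree homomorphism is unavailable, and I would instead restrict attention to the finitely generated data $a_1, \dots, a_r$ together with the tails: sending each partial sum to its exponent vector embeds an infinite family of the $s_n$ into $\nn_0^r$, whereupon Lemma~\ref{lem:HA implies ACCP aux} extracts a strictly $\prec$-increasing sequence of such vectors. The plan is to use this Dickson-type increasing sequence, together with the infinite-order observation, to organize the finitely many relevant atoms and force an infinite descent inside any putative atomic factorization of $b_1$, once again contradicting the atomicity of $N$ and hence the hereditary atomicity of $M$. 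Controlling this finite-rank, possibly torsion-laden, combinatorics is the crux of the argument; everything else is bookkeeping. (If preferred, one may phrase the final contradiction through Theorem~\ref{thm:HA=UA}, enlarging $N$ to an undermonoid rather than using an arbitrary submonoid, but the essential obstacle is unchanged.)
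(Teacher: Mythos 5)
Your overall architecture matches the paper's: assume the ACCP fails, extract a chain of non-unit increments, build a submonoid from the tails and the increments, and show the top element cannot factor into atoms there. Your ``easy'' case is also correct as written: when no nonzero multiple of $b_1$ lies in $\gp(\langle a_1,a_2,\dots\rangle)$, the degree homomorphism $d\colon N\to\nn_0$ is well defined for exactly the reason you give, and the degree-one atom it isolates decomposes as $b_{m_0+1}+(a_{m_0}+t)$, a sum of two nonzero elements of the reduced monoid $N$. This case is essentially the torsion-free situation already settled in \cite[Theorem~3.1]{GV23}.

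The problem is that the remaining case --- which you yourself flag as ``where I expect the main work to lie'' and ``the crux of the argument'' --- is not proved; it is only a plan, and the plan as stated does not parse. You propose to ``send each partial sum $s_n$ to its exponent vector'' in $\nn_0^r$, but for $n>r$ the partial sum $s_n=a_1+\cdots+a_n$ involves atoms outside $\{a_1,\dots,a_r\}$ and has no such vector; no mechanism is given for how the Dickson-extracted increasing sequence produces the ``infinite descent'' in a putative factorization of $b_1$. The paper avoids your case split altogether. Its key step is a Claim, proved by induction, that one can choose blocks $b_i:=a_{s_i}+\cdots+a_{t_i}$ of consecutive increments with $m_0\notin\bigoplus_{i}\nn_0 b_i$. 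Each inductive step is a contradiction argument: if every candidate last block $c_k:=a_s+\cdots+a_{s+k}$ admitted a representation $m_0=\sum_{i<\ell}n_i^{(k)}b_i+n_\ell^{(k)}c_k$, the coefficient vectors $v_k$ would form an infinite subset of $\nn_0^\ell$ (injectivity of $k\mapsto v_k$ uses cancellativity and that the $a_j$ are non-units); Lemma~\ref{lem:HA implies ACCP aux} then yields $\prec$-comparable $v_{k_i}\prec v_{k_{i+1}}$ with $k_i<k_{i+1}$, and subtracting the two representations forces $n_\ell^{(k_{i+1})}\big(\sum_j a_{s+k_i+j}\big)\in\uu(M)$, hence some $a_j\in\uu(M)$, a contradiction. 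Only after the blocks are secured does the paper form the submonoid $\langle b_n, m'_n : n\in\nn\rangle$ with $m'_n:=m_0-\sum_{i\le n}b_i$, whose atoms lie among the $b_n$, so atomicity would place $m_0$ in $\bigoplus_n\nn_0 b_n$. That Dickson-based coefficient argument is precisely the content your sketch is missing, so as it stands the proof is incomplete.
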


\begin{proof}
	Let $M$ be a hereditarily atomic monoid. 
	If $M$ is a group, then $M$ trivially satisfies the ACCP. Therefore we can assume, without loss of generality, that $M$ is not a group.
	
	Suppose, towards a contradiction, that $M$ does not satisfy the ACCP. Let $(m_n + M)_{n \ge 0}$ be an ascending chain of principal ideals that does not stabilize, and set $a_{n+1} := m_n - m_{n+1} \in M$. In this case, $m_n \notin \uu(M)$ for any $n \in \nn_0$. We can further assume that, for each $n \in \nn_0$, the inclusion $m_n + M \subseteq m_{n+1} + M$ is strict and, therefore, that none of the terms of the sequence $(a_n)_{n \ge 1}$ is invertible. We need the following claim.
	\smallskip
	
	\noindent \textit{Claim:} There exist two strictly increasing sequences $(s_j)_{j \ge 1}$ and $(t_j)_{j \ge 1}$ of positive integers satisfying the following two condition:
	\begin{enumerate}
		\item $s_j \le t_j < s_{j+1}$ for every $j \in \nn$, and
		\smallskip
		
		\item $m_0 \notin \bigoplus_{n \in \nn} \nn_0 (a_{s_n} + a_{s_n + 1} + \dots + a_{t_n})$.
	\end{enumerate}
	\smallskip
	
	\noindent \textit{Proof of Claim:} We proceed inductively. Suppose, by way of contradiction, that $m_0 \in \nn_0(a_1 + \dots + a_t)$ for every $t \in \nn$. Then there exists a sequence $(n_t)_{t \ge 1}$ of positive integers such that $m_0 = n_t(a_1 + \dots + a_t)$ for every $t \in \nn$. For each $t \in \nn$, the equality $(n_{t+1} - n_t)(a_1 + \dots + a_t) + n_{t+1} a_{t+1} = 0$, together with the fact that $a_1 \notin \uu(M)$, implies that $n_t \ge n_{t+1}$. Therefore there exists $k \in \nn$ such that $n_k = n_{k+1}$, which implies that $n_{k+1} a_{k+1} = 0$, contradicting that $a_{k+1} \notin \uu(M)$. Hence there exists $N \in \nn$ such that $m_0 \notin \nn_0 (a_1 + \dots + a_N)$. Thus, for our base case, we can take $s_1 = 1$ and $t_1 = N$.
	
	Now suppose we have already found $s_1, \dots, s_{\ell-1}$ and $t_1, \dots, t_{\ell - 1}$ in $\nn$ with $s_i \le t_i$ for every $i \in \ldb 1, \ell - 1 \rdb$ and $t_j < s_{j+1}$ for every $j \in \ldb 1, \ell - 2 \rdb$ such that $m_0 \notin \bigoplus_{i=1}^{\ell - 1} \nn_0 (a_{s_i} + a_{s_i + 1} + \dots + a_{t_i})$. For each $i \in \ldb 1, \ell-1 \rdb$, set
	\begin{equation} \label{eq:b_i's}
		b_i := a_{s_i} + a_{s_i + 1} + \dots + a_{t_i}.
	\end{equation}
	Now suppose, towards a contradiction, that $m_0 \in \bigoplus_{i=1}^\ell \nn_0 b_i$ for each $b_\ell = a_{s_\ell} + a_{s_\ell + 1} + \dots + a_{t_\ell}$, where $s_\ell, t_\ell \in \nn$ with $t_{\ell - 1} < s_\ell \le t_\ell$. Fix $s \in \nn$ with $t_{\ell - 1} < s$. Now, for each $k \in \nn_0$, set $c_k := a_s + a_{s+1} + \dots + a_{s+k}$ and then pick a vector $v_k := (n_1^{(k)}, \dots, n_\ell^{(k)}) \in \nn_0^\ell$ such that $m_0 = n_1^{(k)} b_1 + \dots + n_{\ell-1}^{(k)}b_{\ell-1} + n_\ell^{(k)} c_k$ (this is possible because $m_0 \in \big(\bigoplus_{i=1}^{\ell-1} \nn_0 b_i \big) \oplus \nn_0 c_k$). Observe that $n_\ell^{(k)} \neq 0$ for any $k \in \nn_0$ as, otherwise, $m_0 \in \bigoplus_{i=1}^{\ell-1} \nn_0 b_i$. Now consider the map $f_s \colon \nn_0 \to \nn_0^\ell$ defined by setting $f_s(k) := v_k$ for every $k \in \nn_0$. We claim that $f_s$ is injective. To argue this, suppose that $f_s(j) = f_s(k)$ for some $j,k \in \nn_0$ and assume, without loss of generality, that $j \le k$. Therefore $\big( \sum_{i=1}^{\ell - 1} n_i b_i \big) +n_\ell c_j = m_0 = \big( \sum_{i=1}^{\ell - 1} n_i b_i \big) + n_\ell c_k$ for some $(n_1, \dots, n_\ell) \in \nn_0^\ell$, and so $n_\ell(c_k - c_j) = 0$. From the fact that $n_\ell \neq 0$, we can now deduce that $c_j = c_k$, which in turn implies that $j = k$ because $a_{s+k} \notin \uu(M)$. Hence $f_s$ is injective. As a result, $|f_s(\nn_0)| = \infty$, and so Lemma~\ref{lem:HA implies ACCP aux} guarantees the existence of an infinite sequence $(x_n)_{n \ge 1}$ with terms in $f_s(\nn_0)$ such that $x_n \prec x_{n+1}$ for every $n \in \nn$. For each $n \in \nn$, set $k_n := f_s^{-1}(x_n)$. Observe that the sequence $(k_n)_{n \ge 1}$ cannot be decreasing as, otherwise, it would stabilize and, therefore, there would exist $i \in \nn$ such that $x_i = f_s(k_i) = f_s(k_{i+1}) = x_{i+1}$, which is not possible because the sequence $(x_n)_{n \ge 1}$ is strictly increasing in the poset $(\nn_0^\ell, \preceq)$. Thus, we can take $i \in \nn$ such that $k_i < k_{i+1}$. We can now rewrite the equality $\big( \sum_{j=1}^{\ell - 1} n_j^{(k_i)} b_j \big) +n_\ell^{(k_i)} c_{k_i} = m_0 = \big( \sum_{j=1}^{\ell - 1} n_j^{(k_{i+1})} b_j \big) +n_\ell^{(k_{i+1})} c_{k_{i+1}}$ as follows 
	\begin{equation} \label{eq:HA implies ACCP aux}
		\bigg( \sum_{j=1}^{\ell - 1} \Big( n_j^{(k_{i+1})} - n_j^{(k_i)} \Big) b_j \bigg) + \Big( n_\ell^{(k_{i+1})} - n_\ell^{(k_i)} \Big) c_{k_i}  + n_\ell^{(k_{i+1})} \bigg( \sum_{j=1}^{k_{i+1} - k_i} a_{s + k_i + j} \bigg) = 0.
	\end{equation}
	Since $\big( n_1^{(k_i)}, \dots, n_\ell^{(k_i)} \big) = x_i \prec x_{i+1} = \big( n_1^{(k_{i+1})}, \dots, n_\ell^{(k_{i+1})} \big)$, we can deduce from Equation~\eqref{eq:HA implies ACCP aux} that $n_\ell^{(k_{i+1})} \big( \sum_{j=1}^{k_{i+1} - k_i} a_{s + k_i + j} \big) \in \uu(M)$. This, together with the fact that $n_\ell^{(k_{i+1})} \neq 0$, implies that $a_{s+ k_i + 1} \in \uu(M)$, which is a contradiction. Hence there exist positive integers $s_\ell$ and $t_\ell$ with $t_{\ell - 1} < s_\ell \le t_\ell$ such that, after setting $b_\ell := a_{s_\ell} + a_{s_\ell + 1} + \dots + a_{t_\ell}$, we obtain that $m_0 \notin \bigoplus_{i=1}^\ell \nn_0 b_i$. This concludes our inductive construction, and so we can guarantee the existence of strictly increasing sequences $(s_j)_{j \ge 1}$ and $(t_j)_{j \ge 1}$ satisfying conditions~(1) and~(2) above. Thus, the claim follows.
	\smallskip
	
	Let $(s_j)_{j \ge 1}$ and $(t_j)_{j \ge 1}$ be two strictly increasing sequences satisfying conditions~(1) and~(2), and let $(b_n)_{n \ge 1}$ be the sequence whose terms are defined as in~\eqref{eq:b_i's}. 
	For each $n \in \nn$, the fact that $a_{s_n} \mid_M b_n$ ensures that $b_n \notin \uu(M)$. On the other hand, as $m_0 = m_n + \sum_{i=1}^n a_i$ for every $n \in \nn$, it follows that $\sum_{i=1}^n b_i \mid_M m_0$ for every $n \in \nn$. Now consider the sequence $(m'_n)_{n \ge 1}$ defined by setting $m'_n := m_0 - \sum_{i=1}^n b_i$ for every $n \in \nn$. Observe that each term of the sequence $(m'_n)_{n \ge 1}$ belongs to $M$. In addition, for each $n \in \nn$, the equality $m'_n = m'_{n+1} + b_{n+1}$ implies that $m'_n \notin \uu(M)$. Now consider the submonoid
	\[
		M' := \langle b_n, m'_n : n \in \nn \rangle
	\]
	of $M$. As $M$ is hereditarily atomic, $M'$ is atomic. For each $n \in \nn$, the fact that $b_n, m'_n \notin \uu(M)$ ensures that $b_n, m'_n \notin \uu(M')$. As a result, $M'$ is a reduced monoid. Also, for each $n \in \nn$, the equality $m'_n = m'_{n+1} + b_{n+1}$ implies that $m'_n \notin \mathcal{A}(M'_n)$. Hence $\mathcal{A}(M') \subseteq \{ b_n \mid n \in \nn \}$. Since $M'$ is atomic and $m_0 = b_1 + m'_1 \in M'$, we infer that $m_0 \in \bigoplus_{n \in \nn} \nn_0 b_n$, which is our desired contradiction. Hence $M$ must satisfy the ACCP.
\end{proof}

We could have coined the terms `under-ACCP' and `hereditary ACCP' with meanings mimicking those of underatomicity and hereditary atomicity for ACCP property. However, there is no need for this as the conditions referred by these terms would be equivalent notions.

\begin{cor}
	For a monoid $M$, the following conditions are equivalent.
	\begin{enumerate}
		\item[(a)] $M$ is hereditarily atomic.
		\smallskip
		
		\item[(b)] Every submonoid of $M$ satisfies the ACCP.
		\smallskip
		
		\item[(c)] Every undermonoid of $M$ satisfies the ACCP.
	\end{enumerate}
\end{cor}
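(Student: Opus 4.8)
The plan is to establish the cyclic chain of implications (a) $\Rightarrow$ (b) $\Rightarrow$ (c) $\Rightarrow$ (a), leaning entirely on the two main results already proved in these sections together with the elementary fact (recorded in the background section via \cite[Proposition~1.1.4]{GH06}) that every monoid satisfying the ACCP is atomic. None of the three steps requires new combinatorial machinery; the substantive work has been carried out in Theorems~\ref{thm:HA=UA} and~\ref{thm:HA implies ACCP}.

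For (a) $\Rightarrow$ (b), I would first record the key structural observation that the property of being a submonoid is transitive: if $N$ is a submonoid of $M$ and $N'$ is a submonoid of $N$, then $N'$ is a submonoid of $M$. Consequently, if $M$ is hereditarily atomic and $N$ is any submonoid of $M$, then every submonoid of $N$ is atomic, so $N$ is itself hereditarily atomic. Applying Theorem~\ref{thm:HA implies ACCP} to $N$ then yields that $N$ satisfies the ACCP, which is exactly what (b) asserts for an arbitrary submonoid of $M$.

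The implication (b) $\Rightarrow$ (c) is immediate, since every undermonoid of $M$ is in particular a submonoid of $M$, and the hypothesis (b) already guarantees that every submonoid satisfies the ACCP. For (c) $\Rightarrow$ (a), I would invoke the fact that any monoid satisfying the ACCP is atomic: thus if every undermonoid of $M$ satisfies the ACCP, then every undermonoid of $M$ is atomic, meaning $M$ is underatomic. Theorem~\ref{thm:HA=UA} then upgrades underatomicity to hereditary atomicity, closing the cycle.

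I do not expect any genuine obstacle here, as this corollary is a clean synthesis of the section's two theorems. The only point requiring a moment's care is the transitivity remark used in (a) $\Rightarrow$ (b)---one must note that a submonoid of a hereditarily atomic monoid inherits hereditary atomicity before applying Theorem~\ref{thm:HA implies ACCP}, rather than applying that theorem directly to $M$ (which would only give the ACCP for $M$ itself, not for all of its submonoids).
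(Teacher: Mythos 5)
Your proposal is correct and follows exactly the paper's argument: the same cycle (a) $\Rightarrow$ (b) $\Rightarrow$ (c) $\Rightarrow$ (a), with the observation that a submonoid of a hereditarily atomic monoid is itself hereditarily atomic feeding into Theorem~\ref{thm:HA implies ACCP}, and the ACCP-implies-atomic fact feeding into Theorem~\ref{thm:HA=UA}. No differences worth noting.
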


\begin{proof}
	(a) $\Rightarrow$ (b): Assume that $M$ is hereditarily atomic, and let $N$ be a submonoid of $M$. Then $N$ is also hereditarily atomic and, therefore, it follows from Theorem~\ref{thm:HA implies ACCP} that $N$ satisfies the ACCP.
	\smallskip
	
	(b) $\Rightarrow$ (c): This is clear.
	\smallskip
	
	(c) $\Rightarrow$ (a): It follows from Theorem~\ref{thm:HA=UA} because every monoid satisfying the ACCP is atomic.
\end{proof}

The reverse implication of Theorem~\ref{thm:HA implies ACCP} does not hold in general. 

\begin{example} \label{ex:a non-atomic submonoid of Z^2}
	We have already seen in Example~\ref{ex:HAD not m-HA positive char} that the submonoid $\zz \times \nn_0$ of the abelian group $\zz^2$ satisfies the ACCP but its submonoid $ (\nn_0 \times \{0\}) \sqcup (\zz \times \nn)$ is not atomic. 
\end{example}

In the class consisting of reduced monoids the reverse implication of Theorem~\ref{thm:HA implies ACCP} holds. As a result, we obtain the following equivalences.

\begin{cor} \label{cor:HA=ACCP for reduced monoids}
	For a reduced monoid $M$, the following conditions are equivalent.
	\begin{enumerate}
	\item[(a)] $M$ is hereditarily atomic.
	\smallskip
	
	\item[(b)] $M$ satisfies the ACCP.
\end{enumerate}
\end{cor}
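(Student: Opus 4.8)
The implication (a) $\Rightarrow$ (b) is immediate from Theorem~\ref{thm:HA implies ACCP}, which guarantees that any hereditarily atomic monoid (reduced or not) satisfies the ACCP. So the real content lies in the reverse implication (b) $\Rightarrow$ (a), and my plan is to use reducedness to push the ACCP of $M$ down to every submonoid of~$M$.

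Concretely, I would fix a submonoid $N$ of $M$ and first observe that $N$ is itself reduced: if $x \in \uu(N)$, then $x + y = 0$ for some $y \in N \subseteq M$, so $x \in \uu(M) = \{0\}$. The main step is to show that $N$ inherits the ACCP. Starting from an ascending chain $(b_n + N)_{n \ge 1}$ of principal ideals of $N$, each inclusion $b_n + N \subseteq b_{n+1} + N$ gives $b_n = b_{n+1} + c_{n+1}$ for some $c_{n+1} \in N \subseteq M$, so $(b_n + M)_{n \ge 1}$ is an ascending chain of principal ideals of $M$ and hence stabilizes: $b_n + M = b_{n+1} + M$ for all $n \ge n_0$. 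At this point the decisive observation is that reducedness collapses ``associate'' to ``equal''. Indeed, the equality of principal ideals yields $b_{n+1} = b_n + x$ and $b_n = b_{n+1} + y$ for some $x,y \in M$, so $x + y = 0$ by cancellativity, and since $M$ is reduced we obtain $x = y = 0$ and thus $b_n = b_{n+1}$. This equality holds in $M$, hence in $N$, so $b_n + N = b_{n+1} + N$ for all $n \ge n_0$ and the original chain stabilizes in~$N$. Therefore $N$ satisfies the ACCP, and since every monoid satisfying the ACCP is atomic \cite[Proposition~1.1.4]{GH06}, the submonoid $N$ is atomic. As $N$ was arbitrary, $M$ is hereditarily atomic.

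I expect the only genuinely load-bearing hypothesis to be reducedness, which is precisely what lets the stabilization in $M$ descend verbatim to~$N$; without it the argument breaks, since equal principal ideals in $M$ would only force $b_n$ and $b_{n+1}$ to be associates rather than equal. That this obstruction cannot be repaired is already witnessed by Example~\ref{ex:a non-atomic submonoid of Z^2}: the non-reduced monoid $\zz \times \nn_0$ satisfies the ACCP yet admits a non-atomic submonoid, so (b) $\Rightarrow$ (a) genuinely fails in the absence of reducedness, matching the failure of the converse of Theorem~\ref{thm:HA implies ACCP} in general.
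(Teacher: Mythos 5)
Your proof is correct and follows the same route as the paper: (a)~$\Rightarrow$~(b) via Theorem~\ref{thm:HA implies ACCP}, and (b)~$\Rightarrow$~(a) by showing that reducedness lets the ACCP descend from $M$ to every submonoid. The paper merely asserts that this descent is routine, whereas you have supplied the details (extending a chain in $N$ to a chain in $M$ and using $\uu(M)=\{0\}$ to turn stabilization of ideals into equality of generators), and those details are accurate.
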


\begin{proof}
	(a) $\Rightarrow$ (b): This follows from Theorem~\ref{thm:HA implies ACCP}.
	\smallskip
	
	(b) $\Rightarrow$ (a): Since $M$ is reduced, from the fact that $M$ satisfies the ACCP, one can readily obtain that every submonoid of $M$ satisfies the ACCP and is, therefore, atomic.
\end{proof}

We conclude pointing out that there are integral domains having all its subrings satisfying the ACCP, while their multiplicative monoids are not even hereditarily atomic.

\begin{example}
	We have mentioned in Example~\ref{ex:HAD not m-HA zero char} that every subring of $\qq$ is Noetherian, which implies that every subring of $\qq$ satisfies the ACCP. On the other hand, we have argued in the same example that the multiplicative monoid $\qq^*$ contains a non-atomic submonoid.
\end{example}

\bigskip
\section{The Bounded Factorization Properties}
\label{sec:BF and FF}

According to Theorem~\ref{thm:HA implies ACCP}, every hereditarily atomic monoid satisfies the ACCP. Since the class consisting of monoids satisfying the ACCP contains every BFM, it is natural to wonder whether we can strength the statement of Theorem~\ref{thm:HA implies ACCP} by replacing the ACCP by the bounded factorization property. The following simple example provides a negative answer for this. 

\begin{example} \label{ex:underatomic not BFM}
	Consider the positive monoid $M = \big\langle \frac1p : p \in \pp \big\rangle$. It was outlined in \cite{AAZ90} that $M$ satisfies the ACCP (for a more detailed proof, see \cite[Example~3.3]{AG22} or \cite[Proposition~4.2(2)]{fG22}). This, along with the fact that $M$ is reduced, ensures that every submonoid of $M$ satisfies the ACCP and is, therefore, atomic (this also follows from Corollary~\ref{cor:HA=ACCP for reduced monoids}). Thus, $M$ is hereditarily atomic. However, one can readily argue that $\mathcal{A}(M) = \big\{ \frac1p : p \in \pp \big\}$, whence $\mathsf{L}(1) = \pp$. Thus, $M$ is not a BFM.
\end{example}

The following nonstandard definition will simplify the notation in the proof of the main result of this section, Theorem~\ref{thm:under-BFM = hereditarily BFM}.

\begin{definition}
	Let $M$ be a monoid. We say that an element $b \in M$ is \emph{boundedly atomic} if either $b \in \uu(M)$ or there exists $n \in \nn$ such that~$b$ can be written in $M$ as a sum of at most $n$ non-invertible elements. 
\end{definition}

Observe that a monoid $M$ is a BFM if and only if every element of $M$ is boundedly atomic. As for atomicity, we say that a monoid $M$ is a \emph{hereditary BFM} provided that every submonoid of $M$ is a BFM. Every hereditary BFM is a BFM by definition. Let $M$ be a monoid that is not a hereditary BFM, and take $b \in M$ that is not boundedly atomic in some submonoid of~$M$. The following set will be crucial in the proof of Theorem~\ref{thm:under-BFM = hereditarily BFM}:
\begin{equation} \label{eq:aux poset BF}
	\mathcal{S}_b := \big\{ S : S \text{ is a submonoid of } M \text{ and } b \in S \text{ is not boundedly atomic in } S \big\}.
\end{equation}
Since $b \in M$ is not boundedly atomic in some submonoid of~$M$, the set $\mathcal{S}_b$ is nonempty. Now consider the binary relation $\preceq_b$ on the set~$\mathcal{S}_b$ defined as follows: for any submonoids $S_1$ and $S_2$ contained in $\mathcal{S}_b$,
\begin{equation} \label{eq:aux binary relation BF}
	S_1 \preceq S_2 \quad \text{if} \quad S_1 \subseteq S_2 \ \ \text{and} \ \ \uu(S_1) = S_1 \cap \uu(S_2).
\end{equation}

As the following lemma indicates, it turns out that $\mathcal{S}_b$ is a nonempty poset under $\preceq_b$ and also that $\mathcal{S}_b$ contains a maximal element. The proof of the next lemma uses the same idea as that of Lemma~\ref{lem:poset with Zorn's lemma}, and so we will omit the parts that follow \emph{mutatis mutandis}.

\begin{lemma} \label{lem:poset with Zorn's lemma for BF}
	Let $M$ be a monoid, and let $b$ be an element of $M$ that is not boundedly atomic in some submonoid of~$M$. If $\mathcal{S}_b$ and $\preceq_b$ are defined as in \eqref{eq:aux poset BF} and~\eqref{eq:aux binary relation BF}, respectively, then the following statements hold.
	\begin{enumerate}
		\item $\mathcal{S}_b$ is a nonempty poset under the binary relation $\preceq_b$.
		\smallskip
		
		\item $\mathcal{S}_b$ has a maximal element. 
	\end{enumerate}
\end{lemma}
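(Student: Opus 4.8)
The plan is to mirror the structure of the proof of Lemma~\ref{lem:poset with Zorn's lemma}, since the authors explicitly state that the argument proceeds \emph{mutatis mutandis}. The only substantive difference is that the defining property of membership in $\mathcal{S}_b$ is now ``$b$ is not boundedly atomic in $S$'' rather than ``$b$ is not atomic in $S$,'' so I would carry over verbatim every part of the earlier argument that only depends on the binary relation $\preceq_b$ and the submonoid structure, and rework only the single place where the defining property of $\mathcal{S}_b$ is actually used.

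For part~(1), I would reproduce the transitivity argument exactly: reflexivity and antisymmetry of $\preceq_b$ are immediate from \eqref{eq:aux binary relation BF}, and transitivity is established by the same chase showing that if $S_1 \preceq_b S_2$ and $S_2 \preceq_b S_3$, then $S_1 \cap \uu(S_3) \subseteq \uu(S_1)$. This step makes no reference to atomicity or bounded atomicity whatsoever—it only manipulates the unit-group conditions—so it transfers without change.

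For part~(2), I would again form $S := \bigcup_{i \in I} S_i$ for a nonempty chain $(S_i)_{i \in I}$ and verify that $S$ is an upper bound lying in $\mathcal{S}_b$. The verification that $S$ is a submonoid of $M$ with $b \in S \setminus \uu(S)$, together with the key equality $\uu(S_i) = S_i \cap \uu(S)$ from \eqref{eq:aux equality}, is purely about the unit structure along the chain and is imported verbatim. The one genuinely new piece is showing that $b$ remains \emph{not boundedly atomic} in $S$: suppose for contradiction it were, so that for some fixed $n \in \nn$ we can write $b = c_1 + \dots + c_m$ with $m \le n$ and each $c_j \notin \uu(S)$. Since the chain is directed, I would choose a single index $k$ with $c_1, \dots, c_m \in S_k$, and then use \eqref{eq:aux equality} exactly as in the atomic case to conclude $c_j \notin \uu(S_k)$ for each $j$. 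This exhibits $b$ as a sum of at most $n$ non-invertible elements of $S_k$, contradicting that $b$ is not boundedly atomic in $S_k$. Hence $S \in \mathcal{S}_b$, and \eqref{eq:aux equality} gives $S_i \preceq_b S$ for all $i$, so Zorn's lemma yields a maximal element.

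The main point to get right—and the only real obstacle—is the adaptation of the non-atomicity step: in the atomic case one argues about a factorization of $b$ into \emph{atoms} and must promote atoms of $S$ to atoms of $S_k$, whereas here the cleaner ``bounded'' formulation only requires a factorization into finitely many non-units, so the argument is actually shorter. I expect no difficulty other than ensuring that the uniform bound $n$ witnessing bounded atomicity in $S$ descends to the same uniform bound at the index $k$, which it does automatically because $m \le n$ is preserved when we merely relocate the summands into $S_k$.
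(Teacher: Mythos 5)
Your part (1) and your verification that $S := \bigcup_{i\in I} S_i$ is a submonoid of $M$ with $\uu(S_i) = S_i \cap \uu(S)$ for all $i$ match the paper. The gap is precisely in the one step you flagged as new: your contradiction argument unfolds ``boundedly atomic'' incorrectly. Bounded atomicity of $b$ in $S$ is the assertion that there is an $n$ bounding the length of \emph{every} decomposition of $b$ into non-units of $S$; it is not witnessed by a single decomposition $b = c_1 + \cdots + c_m$ with $m \le n$. Consequently, relocating such a decomposition into some $S_k$ and declaring that this ``contradicts that $b$ is not boundedly atomic in $S_k$'' proves nothing: the failure of bounded atomicity in $S_k$ says that $b$ admits decompositions into \emph{arbitrarily many} non-units of $S_k$, and the existence of one short decomposition (indeed $b = b$ is always such a decomposition) is perfectly compatible with that. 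Under the reading your argument implicitly uses, $\mathcal{S}_b$ would be empty and the lemma vacuous.

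The quantifier structure forces the argument to run in the opposite direction from the atomic case, and that is what the paper does: fix $\ell \in \nn$ and any $i \in I$; since $b$ is not boundedly atomic in $S_i$, write $b$ as a sum of at least $\ell$ elements of $S_i \setminus \uu(S_i)$, and then use the equality $\uu(S_i) = S_i \cap \uu(S)$ to see that each summand lies in $S \setminus \uu(S)$. As $\ell$ was arbitrary, $b$ is not boundedly atomic in $S$, so $S \in \mathcal{S}_b$ and Zorn's lemma applies. In particular, no passage from a decomposition in $S$ down to a single $S_k$ is needed here at all; that descent step is the part of Lemma~\ref{lem:poset with Zorn's lemma} that does \emph{not} transfer.
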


\begin{proof}
	(1) This can be done mimicking the proof of part~(1) of Lemma~\ref{lem:poset with Zorn's lemma}.
	\smallskip
	
	(2) To ease the notation set $\mathcal{S} := \mathcal{S}_b$ and $\preceq \, := \, \preceq_b$. In light of Zorn's lemma, it suffices to argue that every nonempty chain in the poset $\mathcal{S}$ has an upper bound. Let $(S_i)_{i \in I}$ be a nonempty chain in~$\mathcal{S}$, where $I$ is a nonempty set of indices. We claim that $S := \bigcup_{i \in I} S_i$ is an upper bound for $(S_i)_{i \in I}$ in~$\mathcal{S}$. Mimicking the first paragraph of the proof of part~(2) of Lemma~\ref{lem:poset with Zorn's lemma}, one can obtain that $S$ is a submonoid of $M$ such that
	\begin{equation} \label{eq:upper bound under-BFM}
		S_i \subseteq S \ \ \text{and} \ \ \uu(S_i) = S_i \cap \uu(S)
	\end{equation}
	for all $i \in I$. To argue that $S$ belongs to $\mathcal{S}$ we need to verify that $b$ is not boundedly atomic in $S$. To do this, fix $\ell \in \nn$. For any fixed $i \in I$, the fact that $b$ is not boundedly atomic in $S_i$ allows us to take $b_1, \dots, b_\ell \in S_i \setminus \uu(S_i)$ such that $b = b_1 + \dots + b_\ell$, and now the equality $\uu(S_i) = S_i \cap \uu(S)$ guarantees that $b_1, \dots, b_\ell \in S \setminus \uu(S)$. Hence $b$ is not boundedly atomic in $S$, which means that $S$ belongs to $\mathcal{S}$. Thus, $S$ is an upper bound for $(S_i)_{i \in I}$ in~$\mathcal{S}$, and so every nonempty chain in $\mathcal{S}$ has an upper bound, as desired.
\end{proof}

Instead of introducing the term `under-BFM', we will prove that a monoid $M$ is a hereditary BFM if and only if every undermonoid of $M$ is a BFM. Because the proof of the next theorem follows a similar strategy to that of Theorem~\ref{thm:HA=UA}, we will omit the parts of the proof we have already covered in the proof of Theorem~\ref{thm:HA=UA}.

\begin{theorem} \label{thm:under-BFM = hereditarily BFM}
	For a monoid $M$, the following conditions are equivalent.
	\begin{enumerate}
		\item[(a)] $M$ is a hereditary BFM.
		\smallskip
		
		\item[(b)] Every undermonoid of $M$ is a BFM.
	\end{enumerate}
\end{theorem}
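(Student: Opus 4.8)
The plan is to mirror the structure of the proof of Theorem~\ref{thm:HA=UA}. The direction (a) $\Rightarrow$ (b) is immediate, since every undermonoid is in particular a submonoid, so a hereditary BFM has all its undermonoids being BFMs. The substance is in (b) $\Rightarrow$ (a), and I would argue the contrapositive: assuming $M$ is not a hereditary BFM, I would produce an undermonoid of $M$ that fails to be a BFM, contradicting (b). As in Theorem~\ref{thm:HA=UA}, I would split into two cases according to whether $M$ is a group, fixing throughout an element $b \in M$ that is not boundedly atomic in some submonoid, so that $\mathcal{S}_b$ (as defined in~\eqref{eq:aux poset BF}) is nonempty.

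For the case where $M$ is not a group, I would reuse essentially verbatim the ideal construction from Case~1 of Theorem~\ref{thm:HA=UA}. Taking a submonoid $N$ in which $b$ is not boundedly atomic and setting $I := \{m \in M : m \nmid_M b\}$, the same reasoning shows $I$ is a nonempty ideal contained in $M \setminus \uu(M)$, and $M' := N \cup I$ is an undermonoid with $\uu(M') = \uu(N)$. The point to verify is that $b$ remains not boundedly atomic in $M'$: any expression $b = b_1 + \dots + b_\ell$ with each $b_i \in M' \setminus \uu(M')$ forces each $b_i \mid_M b$, hence $b_i \notin I$, so $b_1, \dots, b_\ell \in N \setminus \uu(N)$, which for all $\ell$ contradicts that $b$ is not boundedly atomic in $N$. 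Thus $M'$ is an undermonoid that is not a BFM.

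For the case where $M$ is a group, I would invoke Lemma~\ref{lem:poset with Zorn's lemma for BF} to obtain a maximal element $S$ of the poset $(\mathcal{S}_b, \preceq_b)$, so $b$ is not boundedly atomic in $S$ and $b \notin \uu(S)$. The strategy then parallels Case~2 of Theorem~\ref{thm:HA=UA}: I would show that for any $u \in M$ satisfying condition~(i) or~(ii) of that proof, the extension $S' := S + \nn_0(2b+u)$ again lies in $\mathcal{S}_b$ with $\uu(S') = \uu(S)$, whence maximality forces $2b+u \in S$. The analogues of Claims~1 and~2 (computing the units and atoms of $S'$) transfer unchanged, since they concern only the monoid structure of $S'$; the one genuinely new ingredient replaces Claim~3: rather than showing $b \notin \langle \mathcal{A}(S') \rangle$, I must show that $b$ is \emph{not boundedly atomic} in $S'$. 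Using the description of $\mathcal{A}(S')$ from the Claim~2 analogue together with the fact that the generator $2b+u$ absorbs into $S$ after multiplication by $n_0$ (when condition~(ii) fails), I would argue that any factorization of $b$ into non-units of $S'$ of bounded length would collapse, via the same $n_0$-multiplication trick contradicting $b \notin \uu(S)$, to a bounded factorization of $b$ in $S$. Finally, since $S$ is not a BFM and (b) holds, $\gp(S)$ must be a proper subgroup; choosing $v \in M \setminus \gp(S)$ and running the identical argument with $u = v$ and $u = -v$ produces the contradiction $n_0 v \in S$, concluding the proof.

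The main obstacle I anticipate is the bounded-factorization analogue of Claim~3: whereas in Theorem~\ref{thm:HA=UA} one only had to exclude a single atomic factorization of $b$ in $S'$, here one must exclude factorizations of $b$ into non-units of $S'$ of \emph{every} bounded length uniformly. I expect this to go through because the structural input (Claims 1 and 2, and the $n_0$-multiplication device that kills any occurrence of $2b+u$) is uniform in the length, so the same contradiction with $b \notin \uu(S)$ applies regardless of how many factors appear; nevertheless, this is the step requiring the most care to phrase correctly.
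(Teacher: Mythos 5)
Your proposal is correct in outline and follows the paper's two-case strategy, but your key reductions point in the wrong direction, and as a result you anticipate difficulty exactly where there is none. To show that $b$ is not boundedly atomic in a larger monoid $T \supseteq S$ with $S \cap \uu(T) = \uu(S)$, all you need is the trivial inclusion $S \setminus \uu(S) \subseteq T \setminus \uu(T)$: the arbitrarily long representations of $b$ as sums of non-units of $S$ remain representations as sums of non-units of $T$. In Case 1 you instead prove that representations in $M'$ restrict to representations in $N$ (the converse of what is needed, and not by itself a contradiction), when the required direction is immediate from $\uu(M') = \uu(N)$. In Case 2 you propose to carry over the analogues of Claims 2 and 3, using the $n_0$-multiplication trick to ``collapse'' factorizations of $b$ in $S'$ back into $S$; the paper needs only the analogue of Claim 1 (that $\uu(S') = \uu(S)$ and $2b+u \notin \uu(S')$), after which ``$b$ is not boundedly atomic in $S'$'' follows in one line for the reason above. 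Unlike non-atomicity, non-bounded-atomicity is automatically inherited by unit-preserving extensions, since new elements of $S'$ cannot destroy the long representations that already exist in $S$; this is why no Claim 3 analogue is needed. Your more elaborate route (showing every representation of $b$ in $S'$ avoids $2b+u$, via the $n_0$-trick applied to arbitrary non-units rather than atoms) does go through, so there is no genuine gap, but the step you single out as ``requiring the most care'' is precisely the one that disappears.
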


\begin{proof}
	(a) $\Rightarrow$ (b): This implication follows directly from the definitions.
	\smallskip
	
	(b) $\Rightarrow$ (a): Assume that every undermonoid of $M$ is a BFM. Now suppose, by way of contradiction, that $M$ has a submonoid $N$ that is not a BFM. Then we can pick $b \in N \setminus \uu(N)$ such that $b$ is not boundedly atomic in $N$: for every $n \in \nn$, there exist $\ell \in \nn_{\ge n}$ and $b_1, \dots, b_\ell \in N \setminus \uu(N)$ such that
	\begin{equation} \label{eq:under-BFM}
		b = b_1 + \dots + b_\ell.
	\end{equation}
	Now we split the rest of the proof into the following two cases.
	\smallskip
	
	\noindent \textsc{Case 1:} $M$ is not a group. 
	Set $I := \{m \in M : m \nmid_M b \}$ and then set $M' := N \cup I$. We have already seen in the proof of Theorem~\ref{thm:HA=UA} that $I$ is a nonempty ideal of $M$ with $I \subseteq M \setminus \uu(M)$ and also that $M'$ is an undermonoid of~$M$ with $\uu(M') = \uu(N)$. Since for each $n \in \nn$ we can pick $\ell \in \nn_{\ge n}$ and $b_1, \dots b_\ell \in N \setminus \uu(N)$ such that the equality~\eqref{eq:under-BFM} holds, the inclusion $N \setminus \uu(N) \subseteq M' \setminus \uu(M')$ guarantees that we can also write $b$ as a sum of at least $n$ non-invertible elements in $M'$. Hence $M'$ is not a BFM, contradicting that every undermonoid of~$M$ is a BFM.
	\smallskip
	
	\noindent \textsc{Case 2:} $M$ is a group. Let $\mathcal{S}_b$ and $\preceq_b$ be as defined in \eqref{eq:aux poset BF} and \eqref{eq:aux binary relation BF}, respectively, and to ease the notation set $\mathcal{S} := \mathcal{S}_b$ and $\preceq \, := \, \preceq_b$. It follows from part~(1) of Lemma~\ref{lem:poset with Zorn's lemma for BF} that $\mathcal{S}$ is a nonempty poset under the binary relation $\preceq$, and it follows from part~(2) of the same lemma that $\mathcal{S}$ contains a maximal element, namely $S$. We proceed to prove the following claim.
	\smallskip
	
	\noindent \textsc{Claim.} Take $u \in M$ satisfying at least one of the following two conditions:
	\begin{enumerate}
		\item[(i)] there exists $n_0  \in \nn$ with $n_0 u \in S$;
		\smallskip
		
		\item[(ii)] $nu+q \neq 0$ for any $n \in \nn$ and $q \in S$.
	\end{enumerate}
	Then $2b+u \in S$.
	\smallskip
	
	\noindent \textsc{Proof of Claim.} Set $S' := S + \nn_0(2b + u)$. Mimicking the proof of Claim~1 in the proof of Theorem~\ref{thm:HA=UA}, we obtain that $2b + u \notin \uu(S')$ and also that $\uu(S') = \uu(S)$. Since $S$ is a submonoid of $S'$ with $\uu(S') = \uu(S)$, the fact that $b$ is not boundedly atomic in $S$ immediately implies that $b$ is not boundedly atomic in $S'$. As a result, $S'$ also belongs to $\mathcal{S}$. Now the maximality of $S$ in $\mathcal{S}$ enforces the equality $S' = S$ and, therefore, $2b+u \in S$. Hence the claim is established.
	\smallskip
	
	We are now in a position to conclude our proof. Since $S$ is not a BFM, the fact that every undermonoid of $M$ is a BFM ensures that $\gp(S)$ is a proper subgroup of~$M$. Now we can generate the final desired contradiction by mimicking the last paragraph of the proof of Theorem~\ref{thm:HA=UA} (which uses the previous claim).
\end{proof}

In the direction of Theorems~\ref{thm:HA=UA} and~\ref{thm:under-BFM = hereditarily BFM}, we have not been able to answer the question of whether a monoid whose undermonoids are all FFMs satisfies the property that whose submonoids are all FFMs. We pose the question here as an open problem for the interested readers.

\begin{question} \label{quest:under and hereditary FFM}
	Let $M$ be a monoid. If every undermonoid of $M$ is an FFM, does it follow that every submonoid of $M$ is an FFM?
\end{question}

The length-finite factorization property was recently introduced by A. Geroldinger and Q. Zhong in~\cite{GZ21}: this property, as the bounded factorization property, is a weaker natural version of the finite factorization property. Let $M$ be a monoid. For $\ell \in \nn$, we set
\[
	\mathsf{Z}_\ell(b) := \mathsf{Z}_{M,\ell}(b) := \{z \in \mathsf{Z}(b) : |z| = \ell \} \subseteq \mathsf{Z}(M).
\]
The monoid $M$ is called a \emph{length-finite factorization monoid} (LFFM) if $M$ is atomic and $|\mathsf{Z}_\ell(b)| < \infty$ for all $b \in M$ and $\ell \in \nn$. It follows directly from the definitions that $M$ is an FFM if and only if it is both a BFM and an LFFM. Thus, the length-finite factorization property somehow complements the bounded factorization property with respect to the finite factorization property. We have seen in Example~\ref{ex:underatomic not BFM} that not every hereditarily atomic monoid is a BFM. For the sake of completeness, let us take a look at a hereditarily atomic monoid that is not an LFFM.

\begin{example} \label{ex:underatomic not LFFM}
	Consider the monoid $M := \{0\} \cup \qq_{\ge 1}$ (under the standard addition). We have already seen in Example~\ref{ex:HA monoid with non-HA group} that $M$ is a hereditarily atomic monoid. Observe that $\mathcal{A}(M) = \qq \cap [1,2)$. Arguing that $M$ is not an LFFM amounts to observing that the element $3$ has infinitely many length-$2$ factorizations in $M$, which is illustrated by the equalities $3 = \big( \frac32 - \frac1n \big) + \big( \frac32 + \frac1n \big)$, for every $n \in \nn$ with $n \ge 3$. Hence $M$ is a hereditarily atomic monoid that is not an LFFM.
\end{example}

\begin{remark}
	As a monoid is an FFM if and only if it is both a BFM and an LFFM, it follows from Theorem~\ref{thm:under-BFM = hereditarily BFM} that a positive answer to Question~\ref{quest:under and hereditary FFM} follows from a positive answer to the question we obtain from it after replacing FFM by LFFM.
\end{remark}

The fact that every subring of a given integral domain $R$ is a BFD (resp., an LFFD, an FFD) does not imply that every submonoid of the multiplicative monoid of $R^*$ is a BFM (resp., an LFFM, an FFM). The following example sheds some light upon this observation.

\begin{example} \label{ex:HFFD not HFFM}
	We have mentioned in Example~\ref{ex:HAD not m-HA zero char} that every subring of $\qq$ can be obtained as a localization of $\zz$ and, therefore, every subring of $\qq$ is a Dedekind domain. In addition, it follows from combining \cite[Proposition~2.2]{AAZ90} and \cite[Proposition~1]{GW75} that every Dedekind domain is an FFD, and so every subring of $\qq$ must be an FFD. However, we know that the multiplicative monoid $\qq^*$ contains a non-atomic submonoid.
\end{example}

\bigskip
\section{Half-Factoriality and Length-factoriality}
\label{sec:HFM and LFM}

In this section we characterize the monoids all whose undermonoids are HFMs as well as the monoids all whose undermonoids are LFMs. We start by the half-factoriality characterization. Let $M$ be a monoid having at least one element $c$ with infinite order, and then take $b \in M$ that is not half-factorial in the submonoid $N := \nn_0 c$ of $M$ (for instance, we can take $b := 6c$ after observing that $\mathsf{L}_N(6c) = \{2,3\}$). Let $A_b$ denote the set of atoms dividing $b$ in $N$, and then consider the following set:
\begin{equation} \label{eq:aux poset HF}
	\mathcal{S}_b := \big\{ S : S \text{ is a submonoid of } M \text{ and } A_b \subseteq \mathcal{A}(S) \big\}.
\end{equation}
The set $\mathcal{S}_b$ is, therefore, nonempty. Now we consider the binary relation $\preceq_b$ on the set~$\mathcal{S}_b$ defined as follows: for any submonoids $S_1$ and $S_2$ contained in $\mathcal{S}_b$,
\begin{equation} \label{eq:aux binary relation HF}
	S_1 \preceq S_2 \quad \text{if} \quad S_1 \subseteq S_2 \ \ \text{and} \ \ \uu(S_1) = S_1 \cap \uu(S_2).
\end{equation}

\begin{lemma} \label{lem:poset with Zorn's lemma HF}
	Let $M$ be a monoid having an element with infinite order, and then take $b \in M$ that is not half-factorial in some submonoid $N$ of $M$. Let $A_b$ denote the set of atoms dividing $b$ in $N$, and let $\mathcal{S}_b$ and $\preceq_b$ be as in~\eqref{eq:aux poset HF} and~\eqref{eq:aux binary relation HF}. Then the following statements hold.
	\begin{enumerate}
		\item $\mathcal{S}_b$ is a nonempty poset under the binary relation $\preceq_b$.
		\smallskip
		
		\item $\mathcal{S}_b$ has a maximal element. 
	\end{enumerate}
\end{lemma}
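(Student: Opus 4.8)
The plan is to follow the exact template established in the proofs of Lemma~\ref{lem:poset with Zorn's lemma} and Lemma~\ref{lem:poset with Zorn's lemma for BF}, since the binary relation $\preceq_b$ is defined identically in all three cases. For part~(1), I would simply observe that reflexivity and antisymmetry are immediate, and that the transitivity argument is \emph{verbatim} the one given in part~(1) of Lemma~\ref{lem:poset with Zorn's lemma}: if $S_1 \preceq S_2$ and $S_2 \preceq S_3$, then $S_1 \subseteq S_3$ is clear, and the equality $\uu(S_1) = S_1 \cap \uu(S_3)$ follows by chasing an element $x \in S_1 \cap \uu(S_3)$ down through $S_2 \cap \uu(S_3) = \uu(S_2)$ and then $S_1 \cap \uu(S_2) = \uu(S_1)$. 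Nonemptiness was already noted after~\eqref{eq:aux poset HF}. So part~(1) can be dispatched in one sentence by citing \emph{mutatis mutandis} the earlier lemma.

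For part~(2), I would invoke Zorn's lemma, so the task reduces to showing that every nonempty chain $(S_i)_{i \in I}$ in $\mathcal{S}_b$ has an upper bound. I would set $S := \bigcup_{i \in I} S_i$ and, exactly as in the first paragraph of part~(2) of Lemma~\ref{lem:poset with Zorn's lemma}, check that $S$ is a submonoid of $M$ and that the compatibility equality $\uu(S_i) = S_i \cap \uu(S)$ holds for every $i \in I$; this last step uses only the chain structure and the defining relation $\preceq_b$, so it transfers without change. The genuinely new content is verifying that $S$ actually lies in $\mathcal{S}_b$, i.e.\ that $A_b \subseteq \mathcal{A}(S)$.

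The main obstacle, and the only place the argument differs from the BFM case, is this membership check: I must show that each atom $a \in A_b$ remains an atom in the union $S$. Fix $a \in A_b$; since $A_b \subseteq \mathcal{A}(S_i)$ for every $i$, in particular $a \notin \uu(S_i)$, and the compatibility equality $\uu(S_i) = S_i \cap \uu(S)$ forces $a \notin \uu(S)$. Now suppose $a = c + d$ with $c, d \in S$. Because $(S_i)_{i \in I}$ is a chain, I can choose a single index $k \in I$ with $a, c, d \in S_k$; since $a \in A_b \subseteq \mathcal{A}(S_k)$, either $c \in \uu(S_k)$ or $d \in \uu(S_k)$, and then $\uu(S_k) \subseteq \uu(S)$ (an immediate consequence of $\uu(S_k) = S_k \cap \uu(S)$) gives $c \in \uu(S)$ or $d \in \uu(S)$. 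Hence $a \in \mathcal{A}(S)$, so $A_b \subseteq \mathcal{A}(S)$ and $S \in \mathcal{S}_b$.

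Finally, with $S \in \mathcal{S}_b$ and the compatibility equalities in hand, I conclude $S_i \preceq S$ for all $i \in I$, so $S$ is the desired upper bound; Zorn's lemma then yields a maximal element of $\mathcal{S}_b$. Thus the proof is almost entirely a transcription of the two earlier lemmas, and I would write it by explicitly omitting the parts that follow \emph{mutatis mutandis} and recording in full only the verification that atoms of $A_b$ survive in the union.
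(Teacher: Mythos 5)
Your proposal is correct and follows essentially the same route as the paper: part (1) is deferred to Lemma~\ref{lem:poset with Zorn's lemma}, and part (2) takes the union $S$ of a chain, transfers the submonoid and unit-compatibility checks from the earlier lemma, and verifies $A_b \subseteq \mathcal{A}(S)$ by locating a decomposition $a = c+d$ inside a single $S_j$ and using $\uu(S_j) \subseteq \uu(S)$. The only (harmless) difference is that you explicitly record $a \notin \uu(S)$, a step the paper leaves implicit.
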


\begin{proof}
	To ease the notation of this proof, let $\mathcal{S}$ and $\preceq$ stand for $\mathcal{S}_b$ and $\preceq_b$, respectively.
	\smallskip
	
	(1) It is clear that $\preceq$ is reflexive and antisymmetric, and transitivity follows as in the proof of part~(1) of Lemma~\ref{lem:poset with Zorn's lemma}.
	\smallskip
	
	(2) By Zorn's lemma, it suffices to argue that every nonempty chain in the poset $\mathcal{S}$ has an upper bound. Let $(S_i)_{i \in I}$ be a nonempty chain in~$\mathcal{S}$, where $I$ is a nonempty set of indices. We claim that $S := \bigcup_{i \in I} S_i$ is an upper bound for $(S_i)_{i \in I}$ in~$\mathcal{S}$. Mimicking the first paragraph of the proof of part~(2) of Lemma~\ref{lem:poset with Zorn's lemma}, one can obtain that $S$ is a submonoid of $M$ such that
	\begin{equation} \label{eq:upper bound under-HFM I}
		S_i \subseteq S \ \ \text{and} \ \ \uu(S_i) = S_i \cap \uu(S)
	\end{equation}
	for all $i \in I$. Also, observe that if we can write some $a \in A_b$ as $a = c+d$ for $c,d \in S$, then after taking $j \in I$ with $c,d \in S_j$ the fact that $a \in \mathcal{A}(S_j)$ ensures that either $c \in \uu(S_j) \subseteq \uu(S)$ or $d \in \uu(S_j) \subseteq \uu(S)$, whence $A_b \subseteq \mathcal{A}(S)$. As a result, $S$ belongs to $\mathcal{S}$, and so it follows from~\eqref{eq:upper bound under-HFM I} that~$S$ is an upper bound for $(S_i)_{i \in I}$ in~$\mathcal{S}$. Thus, every nonempty chain in $\mathcal{S}$ has an upper bound.
\end{proof}

We are in a position to prove that the monoids all its undermonoids are HFMs are precisely the abelian groups all whose elements have finite order.

\begin{theorem} \label{thm:undermonoids and the HFM}
	For a monoid $M$, the following conditions are equivalent.
	\begin{enumerate}
		\item[(a)] Every submonoid of $M$ is an HFM.
		\smallskip
		
		\item[(b)] Every undermonoid of $M$ is an HFM.
		\smallskip
		
		\item[(c)] $M$ is a group and each element of $M$ has finite order.
	\end{enumerate}
\end{theorem}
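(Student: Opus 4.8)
The plan is to prove the cycle (c) $\Rightarrow$ (a) $\Rightarrow$ (b) $\Rightarrow$ (c), where the first two implications are routine and the last carries all the weight. For (c) $\Rightarrow$ (a), suppose $M$ is a group all of whose elements have finite order, and let $N$ be any submonoid. For each $x \in N$, taking $n \in \nn$ with $nx = 0$ gives $-x = (n-1)x \in N$, so $N$ is itself a group; and every group is an HFM, since $\mathcal{A}(N/\uu(N)) = \emptyset$ forces $\mathsf{Z}(b) = \{\emptyset\}$ and $\mathsf{L}(b) = \{0\}$ for all $b$. The implication (a) $\Rightarrow$ (b) is immediate because every undermonoid is a submonoid.

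The substance is (b) $\Rightarrow$ (c), which I would prove by contraposition: assuming $M$ fails (c), I will exhibit an undermonoid of $M$ that is not an HFM. If (c) fails, then $M$ has an element $c$ of infinite order---either directly, when $M$ is a group with a non-torsion element, or via the remark from the Background section that a monoid all of whose elements have finite order must be a group, when $M$ is not a group. Fixing such a $c$, I set $N := \langle 2c, 3c \rangle$ and $b := 6c$, so that $6c = 3(2c) = 2(3c)$ gives $\mathsf{L}_N(b) = \{2,3\}$ and $A_b = \{2c, 3c\}$. The goal then reduces to producing an undermonoid $S$ with $A_b \subseteq \mathcal{A}(S)$, for then $b = 6c$ admits atomic factorizations of lengths $2$ and $3$ in $S$, so $S$ is not an HFM.

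I would split into two cases. If $M$ is not a group, the ideal construction from Case~1 of the proof of Theorem~\ref{thm:HA=UA} applies: with $I := \{m \in M : m \nmid_M b\}$ and $M' := N \cup I$, the set $M'$ is an undermonoid of $M$ with $\uu(M') = \uu(N)$, and one checks that $2c, 3c$ remain atoms of $M'$, since any factor of $2c$ or $3c$ divides $b$ and so lies in $N$ rather than $I$; thus $A_b \subseteq \mathcal{A}(M')$, as desired. If $M$ is a group, I invoke Lemma~\ref{lem:poset with Zorn's lemma HF} to obtain a maximal element $S$ of $(\mathcal{S}_b, \preceq_b)$; by construction $A_b \subseteq \mathcal{A}(S)$, so $S$ is not an HFM, and it remains only to show $\gp(S) = M$, which makes $S$ an undermonoid and contradicts~(b).

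For this last point I would mimic Case~2 of the proof of Theorem~\ref{thm:HA=UA}, whose heart is the key lemma that if $u \in M$ satisfies either (i) $n_0 u \in S$ for some $n_0 \in \nn$, or (ii) $nu + q \neq 0$ for all $n \in \nn$ and $q \in S$, then $2b + u \in S$. To prove it I set $S' := S + \nn_0(2b + u)$ and show $S' \in \mathcal{S}_b$ with $\uu(S') = \uu(S)$, so that maximality of $S$ forces $S' = S$ and hence $2b + u \in S$; the endgame from Theorem~\ref{thm:HA=UA}, applied to any $v \in M \setminus \gp(S)$ using that $2b \pm v \notin S$ defeats both (i) and (ii) for $\pm v$, then yields $\gp(S) = M$. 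The equalities $\uu(S') = \uu(S)$ and $2b+u \notin \uu(S')$ follow as in Claim~1 there, using that $b = 6c \notin \uu(S)$ because a positive multiple of the non-unit $2c$ cannot be a unit. The genuinely new ingredient, and the step I expect to be the main obstacle, is verifying that the prescribed atoms survive the extension, i.e.\ $A_b \subseteq \mathcal{A}(S')$: if some $a \in \{2c, 3c\}$ factored nontrivially in $S'$, then $a = s + k(2b+u)$ for some $s \in S$ and $k \geq 1$, and I would reach a contradiction by isolating $ku$ and applying condition~(ii) directly, or by multiplying through by $n_0$ under condition~(i), in each case obtaining a vanishing sum of elements of $S$ that contains a positive multiple of $2c$ and hence forces $2c \in \uu(S)$. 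This atom-preservation step is exactly where the half-factorial argument diverges from the atomicity argument of Theorem~\ref{thm:HA=UA}, which only needed $b$ to remain non-atomic rather than a fixed finite set of elements to remain atoms.
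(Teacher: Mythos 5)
Your proposal is correct and follows essentially the same route as the paper: the ideal construction $M' := N \cup I$ handles the non-group case, and the poset $(\mathcal{S}_b,\preceq_b)$ with the claim that $2b+u\in S$ under conditions (i) or (ii) handles the group case, with the atom-preservation step $A_b \subseteq \mathcal{A}(S')$ carried out exactly as in the paper's proof. The only difference is cosmetic: you first extract an infinite-order element in both cases and then split on whether $M$ is a group, whereas the paper first rules out the non-group case entirely and then treats torsion; the substance of each branch is identical.
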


\begin{proof}
	(a) $\Rightarrow$ (b): This follows immediately from definitions.
	\smallskip
	
	(b) $\Rightarrow$ (c): Assume that every undermonoid of $M$ is an HFM. Let us prove first that $M$ has to be a group. Suppose, by way of contradiction that $M$ is not a group. Then take a non-invertible $c \in M$. Since~$c$ is not invertible, then the submonoid $\nn_0 c$ of $M$ is naturally isomorphic to $\nn_0$, and so $N := \langle 2c, 3c \rangle$ is an atomic submonoid with $\mathcal{A}(N) = \{2c, 3c\}$. Since $\mathsf{L}_N(6c) = \{2,3\}$, it follows that $N$ is not an HFM. Now set
	\[
		I := \{m \in M : m \nmid_M 6c \} \quad \text{ and } \quad M' := N \cup I.
	\]
	Using the fact that $M$ is not a group, we can argue, following the lines of the corresponding part of the proof of Theorem~\ref{thm:HA=UA}, that $M'$ is an undermonoid of $M$. From the fact that no element of $I$ can divide $6c$ in $M$, we obtain that the atoms dividing $6c$ in $N$, namely $2c$ and $3c$, are also atoms in $M'$. Therefore $\{2,3\} \subseteq  \mathsf{L}_{M'}(6c)$, which means that $M'$ is not an HFM, contradicting that every undermonoid of $M$ is an HFM. Therefore $M$ must be a group.
	
	We proceed to argue that each element of $M$ has finite order. Suppose, by way of contradiction, that there exists $c \in M$ with infinite order. Then the submonoid $N := \langle 2c, 3c \rangle$ of $M$ contains the element $b := 6c$ satisfying the inequality $|\mathsf{L}_N(b)| \ge 2$. Set
	\[
		A_b := \{a \in \mathcal{A}(N) : a \mid_N b\},
	\]
	and let $\mathcal{S}_b$ and $\preceq_b$ be as defined in \eqref{eq:aux poset HF} and \eqref{eq:aux binary relation HF}, respectively. To simplify our notation, let  $\mathcal{S}$ and $\preceq$ stand for $\mathcal{S}_b$ and $\preceq_b$, respectively. It follows from part~(1) of Lemma~\ref{lem:poset with Zorn's lemma HF} that $\mathcal{S}$ is a nonempty poset under the binary relation $\preceq$, and it follows from part~(2) of the same lemma that $\mathcal{S}$ contains a maximal element, namely $S$. We proceed to prove the following claim.
	\smallskip
	
	\noindent \textsc{Claim.} Take $u \in M$ satisfying at least one of the following two conditions:
	\begin{enumerate}
		\item[(i)] there exists $n_0  \in \nn$ with $n_0 u \in S$;
		\smallskip
		
		\item[(ii)] $nu+q \neq 0$ for any $n \in \nn$ and $q \in S$.
	\end{enumerate}
	Then $2b+u \in S$.
	\smallskip
	
	\noindent \textsc{Proof of Claim.} Set $S' := S + \nn_0(2b + u)$. Since $A_b$ is a nonempty subset of $S$, it follows that $b \notin \uu(S)$ and so, after mimicking the proof of Claim~1 in the proof of Theorem~\ref{thm:HA=UA}, we obtain that $2b + u \notin \uu(S')$ and also that $\uu(S') = \uu(S)$. Let us argue now that $A_b \subseteq \mathcal{A}(S')$. To do this, fix $a \in A_b$. Since $A_b \subseteq \mathcal{A}(S)$, we can write $b = a+s$ for some $s \in S$. Suppose that $a = \big(s_1 + k_1(2b+u)\big) + \big(s_2 + k_2(2b+u)\big)$ for some $s_1, s_2 \in S$ and $k_1, k_2 \in \nn_0$. If $k_1 + k_2$ were positive, then condition~(ii) above would not hold as the following equality
	\begin{equation} \label{eq:aux equation HFM}
		s + (2(k_1 + k_2) - 1)b + s_1 + s_2 + (k_1 + k_2) u = 0
	\end{equation}
	illustrates, and so there must exist $n_0 \in \nn$ such that $m_0 := n_0 u \in S$: then after multiplying both sides of~the equality \eqref{eq:aux equation HFM} by $n_0$, we obtain that $n_0s + (2(k_1 + k_2) - 1)n_0b + n_0(s_1 + s_2) + (k_1 + k_2) m_0 = 0$, which is not possible because $b \notin \uu(S)$. Hence $k_1 = k_2 = 0$ and so the equality $a = s_1 + s_2$, in tandem with $a \in \mathcal{A}(S)$, implies that either $s_1 \in \uu(S) = \uu(S')$ or $s_2 \in \uu(S) = \uu(S')$. As a consequence, $a \in \mathcal{A}(S')$, and we conclude that $A_b \subseteq \mathcal{A}(S')$, as desired. This implies that $S'$ also belongs to $\mathcal{S}$. Therefore the equality $\uu(S') = \uu(S)$ means that $S \preceq S'$, and so the maximality of $S$ in $\mathcal{S}$ guarantees that $S' = S$. Thus, $2b+u \in S$, and the claim follows.
	\smallskip
	
	Since $A_b \subseteq \mathcal{A}(S)$, it follows that $|\mathsf{L}_S(b)| \ge 2$ and so that $S$ is not an HFM. This, together with the fact that every undermonoid of $M$ is an HFM, implies that $\gp(S)$ is a proper subgroup of~$M$. Now we can generate the final desired contradiction following the last paragraph of the proof of Theorem~\ref{thm:HA=UA} \emph{mutatis mutandis} (this needs the established claim).
	\smallskip
	
	(c) $\Rightarrow$ (a): Suppose that $M$ is a group with no elements of infinite order. Then every submonoid of~$M$ must be a group and, therefore, an HFM in a trivial way.
\end{proof}

We conclude this section characterizing monoids all whose undermonoids are length-factorial monoids. A monoid $M$ is called a \emph{length-factorial monoid} (LFM) if $M$ is atomic and $|\mathsf{Z}_\ell(b)| \le 1$ for all $b \in M$ and $\ell \in \nn$. It follows directly from the definitions that a monoid is a UFM if and only if it is both an HFM and an LFM. The notion of length-factoriality was introduced by Coykendall and Smith in \cite{CS11} as a dual of half-factoriality (with respect to the unique factorization property) and under the term `other-half-factoriality': in the same paper, they proved that the multiplicative monoid of an integral domain $R$ is an LFM if and only if $R$ is a UFD. Length-factoriality was further investigated in~\cite{CCGS21}, where the term `length-factoriality' was adopted. Further studies of length-factoriality have been carried out in~\cite{GZ21} and more recently in~\cite{BVZ23}. 

It turns out that, as for the case of half-factoriality, all the undermonoids of a monoid are LFMs if and only if the given monoid is an abelian group containing no elements of infinite order.

\begin{theorem} \label{thm:undermonoids and the LFM}
	For a monoid $M$, the following conditions are equivalent.
	\begin{enumerate}
		\item[(a)] Every submonoid of $M$ is an LFM.
		\smallskip
		
		\item[(b)] Every undermonoid of $M$ is an LFM.
		\smallskip
		
		\item[(c)] $M$ is a group and each element of $M$ has finite order.
	\end{enumerate}
\end{theorem}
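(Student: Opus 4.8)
The plan is to mirror the structure of the two preceding theorems (Theorems~\ref{thm:HA=UA} and~\ref{thm:undermonoids and the HFM}), establishing the cycle (a)~$\Rightarrow$~(b)~$\Rightarrow$~(c)~$\Rightarrow$~(a). The implication (a)~$\Rightarrow$~(b) is immediate from definitions, since every undermonoid is in particular a submonoid. The implication (c)~$\Rightarrow$~(a) is equally painless: if $M$ is a group all of whose elements have finite order, then every submonoid of $M$ is itself a group (each element, having finite order, has its inverse inside the submonoid it generates), and a group is vacuously an LFM because its only element up to units is $0$, which has the empty factorization as its unique factorization of any length. So the entire content lies in (b)~$\Rightarrow$~(c), and I would devote the bulk of the proof to it.

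For (b)~$\Rightarrow$~(c) I would again split into showing $M$ is a group and then showing every element has finite order, and I expect to reuse the auxiliary poset machinery of Lemma~\ref{lem:poset with Zorn's lemma HF} essentially verbatim, since that lemma was deliberately phrased in terms of ``not half-factorial,'' but the witnessing set $A_b$ of atoms dividing a fixed non-half-factorial element $b$ works just as well to detect a failure of length-factoriality. The first step is to suppose $M$ is not a group, pick a non-invertible $c$, and exhibit a submonoid $N$ that fails to be an LFM. Here the key observation is that an HFM need not be an LFM and vice versa, so I cannot literally copy the element $6c$ in $\langle 2c,3c\rangle$ used for the HFM case; instead I would take $N:=\langle 2c,3c,4c,5c\rangle$ or simply exploit that $\nn_0 c\cong\nn_0$ forces $\mathsf{Z}_\ell$ to collapse, and find an element with two distinct factorizations of the \emph{same} length. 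For instance in a numerical semigroup generated by at least three consecutive integers one has equalities like $(a)+(a+2)=(a+1)+(a+1)$ giving two length-$2$ factorizations of one element. After fixing such an $N$ and such a $b$ with $|\mathsf{Z}_{N,\ell}(b)|\ge 2$ for some $\ell$, I would set $I:=\{m\in M:m\nmid_M b\}$ and $M':=N\cup I$, and invoke the now-familiar verification (from Theorem~\ref{thm:HA=UA}) that $M'$ is an undermonoid with $\uu(M')=\uu(N)$ and that every atom of $N$ dividing $b$ remains an atom in $M'$, so the two distinct length-$\ell$ factorizations survive in $M'$, contradicting (b).

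The harder half is showing, when $M$ is a group, that no element has infinite order. Supposing $c$ has infinite order, I would again form a submonoid (say $N:=\langle 2c,3c\rangle$, or a slightly larger one chosen so that $b$ has two equal-length factorizations) and let $A_b$ be the atoms dividing $b$; Lemma~\ref{lem:poset with Zorn's lemma HF} then furnishes a $\preceq$-maximal $S$ in $\mathcal{S}_b$. The crux, and the step I expect to be the main obstacle, is reproving the Claim that $2b+u\in S$ whenever $u$ satisfies condition~(i) or~(ii): this is exactly the delicate $p$-adic/divisibility bookkeeping of the HFM proof, and I would again argue that if $a\in A_b$ splits in $S'=S+\nn_0(2b+u)$ then the nonzero coefficients on $2b+u$ would force an equation contradicting $b\notin\uu(S)$, so that $A_b\subseteq\mathcal{A}(S')$ and $S'\in\mathcal{S}$, whence maximality gives $S'=S$. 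The payoff is that $A_b\subseteq\mathcal{A}(S)$ forces $S$ to inherit the two distinct equal-length factorizations of $b$, so $S$ is not an LFM; then (b) forces $\gp(S)$ to be a proper subgroup, and I would close by choosing $v\in M\setminus\gp(S)$ and running the final contradiction of Theorem~\ref{thm:HA=UA} \emph{mutatis mutandis}, deriving simultaneously that $nv\notin S$ for all $n$ and that $n_0v\in S$ for some $n_0$. The one genuinely new check, relative to the HFM argument, is verifying that the failure of length-factoriality (rather than half-factoriality) is indeed preserved by the presence of $A_b$ inside the atom set, which follows because two distinct factorizations of $b$ of a common length, written in atoms all lying in $A_b$, remain distinct factorizations of the same length once those elements are atoms of the larger monoid.
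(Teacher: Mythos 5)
Your overall architecture --- (a) $\Rightarrow$ (b) from the definitions, (c) $\Rightarrow$ (a) via torsion groups, and (b) $\Rightarrow$ (c) split into the group case (using the ideal $I=\{m\in M: m\nmid_M b\}$ and $M'=N\cup I$) and the infinite-order case (using the poset of Lemma~\ref{lem:poset with Zorn's lemma HF} and the Claim that $2b+u\in S$) --- is exactly the paper's. Two concrete points in your plan need repair.

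First, your candidate $N:=\langle 2c,3c,4c,5c\rangle$ collapses to $\langle 2c,3c\rangle$ (since $4c=2c+2c$ and $5c=2c+3c$), and $\langle 2c,3c\rangle$ \emph{is} length-factorial: if $2a+3b=2a'+3b'$ and $a+b=a'+b'$, subtracting forces $a=a'$ and $b=b'$. So neither that choice nor the bare $\langle 2c,3c\rangle$ you float later in the infinite-order case exhibits a failure of length-factoriality. Your fallback --- three consecutive generators with $(a)+(a+2)=(a+1)+(a+1)$ --- is the correct move and is precisely what the paper uses: $N:=\langle 3c,4c,5c\rangle$ and $b:=8c$, with the two length-$2$ factorizations $3c+5c$ and $4c+4c$.

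Second, your closing assertion that the two distinct equal-length factorizations of $b$ ``remain distinct \ldots once those elements are atoms of the larger monoid'' is not automatic. Factorizations live in the free monoid on $\mathcal{A}(S/\mathcal{U}(S))$, and the maximal monoid $S$ may have nontrivial units, so two atoms in $A_b$ that are distinct in $N$ could become associates in $S$, in which case $3c+5c$ and $4c+4c$ would name the \emph{same} factorization and $|\mathsf{Z}_{S,2}(b)|\ge 2$ would fail. The paper closes this by checking that no two of $3c,4c,5c$ can be associates in $S$: otherwise $\pm c$ or $\pm 2c$ would lie in $\mathcal{U}(S)$, forcing $3c$ (resp.\ $4c$) to be a unit and contradicting $A_b\subseteq\mathcal{A}(S)$. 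With that check added, and with the monoid corrected as above, your argument matches the paper's proof.
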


\begin{proof}
	(a) $\Rightarrow$ (b): This follows directly from definitions.
	\smallskip
	
	(b) $\Rightarrow$ (c): For this, assume that every undermonoid of $M$ is an LFM, and let us argue first that $M$ is a group. Suppose, towards a contradiction, that this is not the case. Now take a non-invertible $c \in M$. Since $c$ is not invertible, $N := \langle 3c, 4c, 5c \rangle$ is an atomic submonoid of $M$ with $\mathcal{A}(N) = \{3c, 4c, 5c\}$. Because~$N$ is a reduced monoid, the element $8c$ has exactly two factorizations in $N$, namely, $3c + 5c$ and $4c + 4c$. Thus, $|\mathsf{Z}_{N,2}(8c)| = 2$, and so $N$ is not an LFM. We can now follow the lines of the corresponding part of the proof of Theorem~\ref{thm:undermonoids and the HFM} (with $8c$ playing the role of $6c$) to construct an undermonoid $M'$ of~$M$ containing $N$ such that $\uu(M') = \uu(N) = \{0\}$ and $\mathcal{A}(N) \subseteq \mathcal{A}(M')$. In this case, observe that $|\mathsf{Z}_{N,2}(8c)| \ge 2$. Therefore $M'$ is not an LFM, which contradicts that every undermonoid of $M$ is an LFM. Hence $M$ must be a group.
	
	We proceed to argue that each element of $M$ has finite order. Suppose, by way of contradiction, that there exists an element $c \in M$ with infinite order, and define $N$ as in the previous paragraph, namely, $N := \langle 3c, 4c, 5c \rangle$. After setting $b := 8c$, we see that
	\[
		A_b := \{a \in \mathcal{A}(N) : a \mid_N b\} = \mathcal{A}(N) = \{3c, 4c, 5c\}.
	\]
	Now we can argue the existence of a maximal element $S$ in the poset $(\mathcal{S}_b, \preceq_b)$, defined as in~\eqref{eq:aux poset HF} and~\eqref{eq:aux binary relation HF}, by following the lines of the corresponding part of the proof of Theorem~\ref{thm:undermonoids and the HFM}, and we can also prove the following claim as we did in the proof of Theorem~\ref{thm:undermonoids and the HFM}.
	
	\noindent \textsc{Claim.} Take $u \in M$ satisfying at least one of the following two conditions:
	\begin{enumerate}
		\item[(i)] there exists $n_0  \in \nn$ with $n_0 u \in S$;
		\smallskip
		
		\item[(ii)] $nu+q \neq 0$ for any $n \in \nn$ and $q \in S$.
	\end{enumerate}
	Then $2b+u \in S$.
	\smallskip
	
	As $S$ belongs to $\mathcal{S}_b$, the inclusion $A_b \subseteq \mathcal{A}(S)$ holds. In addition, observe that no two of the atoms $3c, 4c$, and $5c$ (in $A_b$) can be associates in $S$ as otherwise either $\{\pm s\} \subset S$ or $\{\pm 2s\} \subset S$ (the first inclusion is not possible because $3s \in \mathcal{A}(S)$ and the second inclusion is not possible because $4s \in \mathcal{A}(S)$). Since $3s, 4s$, and $5s$ are pairwise non-associate atoms of $S$, it follows that $|\mathsf{Z}_{S,2}(b)| \ge 2$, which implies that $S$ is not an LFM. This, together with the fact that every undermonoid of $M$ is an LFM, implies that $\gp(S)$ is a proper subgroup of~$M$. In order to generate our desired contradiction, we just need to mimic the last paragraph of the proof of Theorem~\ref{thm:HA=UA} (which uses the previous claim).
	\smallskip
	
	(c) $\Rightarrow$ (a): Suppose that $M$ is a group containing no elements of infinite order. Then every submonoid of $M$ must be a group and, therefore, an LFM in a trivial way.
\end{proof}

For the sake of completeness, we conclude this paper emphasizing the existence of an integral domain~$R$ whose subrings are all HFDs (resp., LFDs) such that the multiplicative monoid $R^*$ contains a submonoid that is not an HFM (resp., an LFM). Let us begin with the half-factorial property.

\begin{example}
	Since the integral domain $\zz$ does not have proper subring, it follows trivially that every subring of $\zz$ is a UFD and, therefore, an HFD. We proceed to identify a submonoid of $\zz^*$ that is not an HFM. To do this, consider the submonoid $M := \{(0,0)\} \cup \nn^2$ of the abelian group $\zz^2$. One can readily check that $M$ is atomic with
	\[
		\mathcal{A}(M) := \{(m,n) \in \nn^2 : m=1 \text{ or } n=1 \}. 
	\]
	Now fix $p,q \in \pp$ with $p \neq q$, and let $S$ be the image of the monoid homomorphism $\varphi \colon M \to \nn$ defined by the assignments $(m,n) \mapsto p^m q^n$. Since $\nn$ is a UFM, it follows that $S$ is an isomorphic copy of $M$. Because $\varphi \colon M \to S$ is an isomorphism, 
	\[
		\mathcal{A}(S) := \big\{2^m 3^n : m=1 \text{ or } n=1 \big\}.
	\]
	As a consequence, $(pq^2) \cdot (p^2 q)$ and $(pq) \cdot (pq) \cdot (pq)$ are two factorizations of $p^3q^3$ in $S$ having different lengths. Thus, $S$ is a submonoid of the multiplicative monoid $\zz^*$ that is not an HFM.
\end{example}

Let us now consider the length-factorial property.

\begin{example}
	Let us now identify a submonoid of the multiplicative monoid $\zz^*$ that is not an LFM. Consider the Hilbert monoid $H$, that is, $H := (4 \nn_0 + 1, \cdot)$. It is well known (see \cite{sC14}) and routine to argue that the set of atoms of $H$ is $\mathcal{A}(H) := P \sqcup Q$, where
	\[
		P := \big\{ p \in \pp : \ p \equiv 1 \! \! \! \pmod{4}\big\} \quad \text{ and } \quad Q := \big\{ p_1 p_2 : \ p_1, p_2 \in \pp \ \text{ and } \ p_1, p_2 \equiv 3 \! \! \! \pmod 4 \big\}.
	\]
	As a result, $H$ is a submonoid of the multiplicative monoid $\zz^*$ that is not an LFM: indeed, because $H$ is reduced, for any two distinct primes $p$ and $q$ with $p,q \equiv 3 \! \pmod 4$ we see that $p^2 \cdot q^2$ and $(pq) \cdot (pq)$ are two distinct length-$2$ factorizations of $p^2q^2$ in $H$.
\end{example}

\bigskip
\section*{Acknowledgments}

The first author was kindly supported by the NSF, under the awards DMS-1903069 and DMS-2213323.

\bigskip
\section*{Conflict of Interest Statement}

The authors state that there is no conflict of interest.

\bigskip
\section*{Data availability statement}

The authors state that this manuscript has no associated data.

\bigskip


\begin{thebibliography}{20}
	
	\bibitem{AAZ90} D.~D. Anderson, D.~F. Anderson, and M. Zafrullah, \emph{Factorizations in integral domains}, J. Pure Appl. Algebra \textbf{69} (1990) 1--19.
	
	\bibitem{AAZ92} D. D. Anderson, D. F. Anderson, and M. Zafrullah, \emph{Factorization in integral domains II}, J. Algebra \textbf{152} (1992) 78–93.

	\bibitem{AG22} D. F. Anderson and F. Gotti, \emph{Bounded and finite factorization domains}. In: Rings, Monoids, and Module Theory (Eds. A. Badawi and J. Coykendall) pp. 7--57. Springer Proceedings in Mathematics \& Statistics, Vol. \textbf{382}, Singapore, 2022.

	\bibitem{BC19} J. G. Boynton and J. Coykendall, \emph{An example of an atomic pullback without the ACCP}, J. Pure Appl. Algebra \textbf{223} (2019) 619--625.
	
	\bibitem{BVZ23} A. Bu, J. Vulakh, and A. Zhao, \emph{Length-factoriality and pure irreducibility}, Comm. Algebra \textbf{51} (2023) 3745--3755.
	
	\bibitem{lC60} L. Carlitz, \emph{A characterization of algebraic number fields with class number two}, Proc. Amer. Math. Soc. \textbf{11} (1960) 391--392.
	
	\bibitem{sC14} S. T. Chapman, \emph{A tale of two monoids: a friendly introduction to nonunique factorizations}, Math. Mag. \textbf{87} (2014) 163--173.
	
	\bibitem{CCGS21} S. T. Chapman, J. Coykendall, F. Gotti, and W. W. Smith, \emph{Length-factoriality in commutative monoids and integral domains}, J. Algebra \textbf{578} (2021) 186--212.

	\bibitem{pC68} P.~M. Cohn, \emph{Bezout rings and and their subrings}, Math. Proc. Cambridge Philos. Soc. \textbf{64} (1968) 251--264.

	\bibitem{CG24} J. Coykendall and F. Gotti, \emph{Atomicity in integral domains}. In: Rings and Factorizations (Eds. M. Brešar, A. Geroldinger, B. Olberding, and D. Smertnig), Springer Nature, Switzerland, 2024.

	\bibitem{CG19} J. Coykendall and F. Gotti, \emph{On the atomicity of monoid algebras}, J. Algebra \textbf{539} (2019) 138--151.
	
	\bibitem{CGH21} J. Coykendall, F. Gotti, and R. Hasenauer, \emph{Hereditary atomicity in integral domains}. J. Pure Appl. Algebra \textbf{227} (2023) 107249.
	
	\bibitem{CS11} J. Coykendall and W. W. Smith, \emph{On unique factorization domains}, J. Algebra \textbf{332} (2011) 62--70.
	
	\bibitem{GH06} A.~Geroldinger and F.~Halter-Koch, \emph{Non-unique Factorizations: Algebraic, Combinatorial and Analytic Theory}, Pure and Applied Mathematics Vol. 278, Chapman \& Hall/CRC, Boca Raton, 2006.

	\bibitem{GZ21} A. Geroldinger and Q. Zhong, \emph{A characterization of length-factorial Krull monoids}, New York J. Math. \textbf{27} (2021) 1347--1374.
	

	
	\bibitem{fG23} F.~Gotti, \emph{Hereditary atomicity and ACCP in abelian groups}. Submitted. Preprint on arXiv: https://arxiv.org/abs/2303.01039.
		
	\bibitem{fG19} F.~Gotti, \emph{Increasing positive monoids of ordered fields are FF-monoids}, J. Algebra \textbf{518} (2019) 40--56.

	\bibitem{fG22} F. Gotti, \emph{On semigroup algebras with rational exponents}, Comm. Algebra \textbf{50} (2022) 3--18.
	
	\bibitem{GL23} F. Gotti and B. Li, \emph{Atomic semigroup rings and the ascending chain condition on principal ideals}, Proc. Amer. Math. Soc. \textbf{151} (2023) 2291--2302.

	\bibitem{GL23a} F. Gotti and B. Li, \emph{Divisibility and a weak ascending chain condition on principal ideals}. Preprint on arXiv: https://arxiv.org/abs/2212.06213

	\bibitem{GR25} F. Gotti and H. Rabinovitz, \emph{On the ascent of atomicity to monoid algebras}, J. Algebra \textbf{663} (2025) 857--881.

	\bibitem{GV23} F. Gotti and J. Vulakh, \emph{On the atomic structure of torsion-free monoids}, Semigroup Forum \textbf{107} (2023) 402--423.
	
	\bibitem{aG74} A. Grams, \emph{Atomic rings and the ascending chain condition for principal ideals}, Math. Proc. Cambridge Philos. Soc. \textbf{75} (1974) 321--329.

	\bibitem{GW75} A. Grams and H. Warner, \emph{Irreducible divisors in domains of finite character}, Duke Math. J. \textbf{42} (1975) 271--284.

	\bibitem{pG01} P.~A. Grillet, \emph{Commutative Semigroups}, Advances in Mathematics Vol. 2, Kluwer Academic Publishers, Boston, 2001.

	\bibitem{fHK92} F. Halter-Koch, \emph{Finiteness theorems for factorizations}, Semigroup Forum \textbf{44} (1992) 112--117.
	
		


	\bibitem{aZ76} A. Zaks, \emph{Half-factorial domains}, Bull. Amer. Math. Soc. \textbf{82} (1976) 721--723.
	
	\bibitem{aZ80} A.~Zaks, \emph{Half-factorial domains}, Israel J. Math. \textbf{37} (1980) 281--302.

\end{thebibliography}
\end{document}